\documentclass[11pt,a4paper]{amsart}
\usepackage{amsmath,amssymb,amsthm,mathtools}
\usepackage{enumitem}
\usepackage[margin=1in]{geometry}
\usepackage{hyperref}
\hypersetup{colorlinks=true,linkcolor=blue,citecolor=blue,urlcolor=blue}

\newtheorem{theorem}{Theorem}[section]
\newtheorem{proposition}[theorem]{Proposition}
\newtheorem{lemma}[theorem]{Lemma}
\newtheorem{corollary}[theorem]{Corollary}
\newtheorem{conjecture}[theorem]{Conjecture}
\newtheorem{question}[theorem]{Question}
\theoremstyle{definition}

\newtheorem{remark}[theorem]{Remark}

\DeclareMathOperator{\II}{II}
\DeclareMathOperator{\arccosh}{arccosh}
\DeclareMathOperator{\arcsinh}{arcsinh}

\title[Robin nullity and asymptotic radius of the critical spherical catenoid]%
{Robin nullity in mode $|k|=1$ and asymptotic radius\\of the critical spherical catenoid}
\author{Alexander Pigazzini}
\date{}

\begin{document}
\maketitle

\begin{abstract}
Medvedev \cite{Medvedev} proved that the critical spherical catenoid $\Sigma_a\subset B^3(r(a))\subset\mathbb{H}^3$ --- the rotationally symmetric free boundary minimal annulus in the family introduced by Mori \cite{Mori} and reconsidered by do Carmo--Dajczer \cite{doCarmoDajczer} --- has Morse index at least $4$, and conjectured equality \cite[Rem.~5.6]{Medvedev}. In this note we compute the Robin nullity and index of $\Sigma_a$ \emph{exactly} in the angular Fourier mode $|k|=1$, and we determine the boundary radius $r(a)$ in its two limiting regimes. In detail:

\smallskip
\noindent\textit{(I)} \emph{Robin nullity and index in mode $|k|=1$.}
The Robin nullity of the Jacobi operator
$L_{\Sigma_a}=\Delta_g+(|\II|^2-2)$ in angular Fourier mode $|k|=1$ equals $2$ for every $a>1/2$, with kernel spanned by the Killing--Jacobi fields associated to the rotations
$L_{12},L_{13}\in\mathfrak{so}(3,1)$ that fix the geodesic axis of $\Sigma_a$ and send $\partial B^3(r(a))$ to itself. The radial profile of these Jacobi fields admits the closed form
\[
f_*(s)=\partial_s\Phi_a^0(s,0)=\frac{d}{ds}\bigl[A(s)\cosh\varphi(s)\bigr]
=\sinh r(s)\cdot r'(s),
\]
where $r(s)=\mathrm{dist}_{\mathbb{H}^3}(p_0,\Phi_a(s,0))$. As a consequence of Sturm--Liouville theory and the structure of the zeros of $f_*$, the Robin Morse index of $\Sigma_a$ in mode $|k|=1$ equals $2$ for every $a>1/2$, refining from the analytic side the lower bound $\mathrm{ind}(\Sigma_a)\geq 4$ of Medvedev  \cite{Medvedev}.

\smallskip
\noindent\textit{(II)} \emph{Asymptotic radius.} The boundary radius
admits the asymptotic expansion
\[
r(a)\;=\;\tfrac{3}{2}\log a+d_\infty+o(1)\qquad(a\to\infty),
\qquad
d_\infty\;=\;\log\!\frac{\sqrt{2}\,\Gamma(1/4)^2}{\pi^{3/2}}\;=\;\log\!\frac{2\sqrt{2\pi}}{\Gamma(3/4)^2}.
\]
The closed form for $d_\infty$ follows from a closed evaluation of the improper integral $I_\infty=\int_0^{\infty}\cosh(2t)^{-3/2}\,dt$ via the Beta function.

\smallskip
\noindent\textit{(III)} \emph{Degenerate limit.} As $a\to(1/2)^+$, $r(a)=c_*\sqrt{a-1/2}\,(1+o(1))$ with $c_*=\sigma_*\cosh\sigma_*$, where $\sigma_*$ is the unique positive fixed point of $\sigma=\coth\sigma$.

\smallskip
The proof of (I) follows the mode-by-mode strategy of
Devyver  \cite{Devyver} for the Euclidean critical catenoid, with the group $\mathfrak{so}(3,1)$ replacing $\mathfrak{so}(3)$, supplemented by the closed-form identification $f_*=\partial_s\Phi^0$ specific to the hyperbolic ambient. The proof of (II) is a Laplace-type asymptotic analysis of the implicit free boundary condition.
\end{abstract}

\medskip
\noindent \emph{Keywords: Free boundary minimal surfaces; Spherical catenoid; Jacobi operator; Robin nullity; Morse index in Fourier modes; Killing fields; Asymptotic geometry; Gamma function values.}

\medskip
\emph{MSC2020: 53A10; 53C42; 58E12; 58J50.}

\bigskip

\section{Introduction}\label{sec:intro}

Let $\mathbb{H}^3$ denote three-dimensional hyperbolic space, modelled
as the upper sheet of the unit hyperboloid in Minkowski $4$-space:
\[
\mathbb{H}^3=\{x\in\mathbb{R}^{3,1}:\langle x,x\rangle_L=-1,\,x_0>0\},
\quad
\langle x,y\rangle_L=-x_0y_0+\sum_{i=1}^3 x_iy_i,
\]
and let $B^3(r)\subset\mathbb{H}^3$ be the closed geodesic ball of radius $r>0$ centred at $p_0=(1,0,0,0)$. A \emph{free boundary minimal
surface} (FBMS) $\Sigma\subset B^3(r)$ is a properly immersed surface with zero mean curvature in $\mathbb{H}^3$ meeting $\partial B^3(r)$ orthogonally along $\partial\Sigma$.

The study of free boundary minimal surfaces (FBMS) in geodesic balls has been driven, in the Euclidean case, by the discovery of Fraser and Schoen \cite{FraserSchoen2016} that the critical catenoid arises as an extremal surface for the first normalized Steklov eigenvalue, establishing a deep
link between FBMS and spectral optimization; the uniqueness of the critical catenoid among embedded free boundary minimal annuli in the Euclidean ball is the content of the Fraser--Li conjecture \cite{FraserLi2014}. A central invariant in this circle of ideas is the Morse index: for the Euclidean
critical catenoid it was computed to equal $4$, independently and by different methods, by Devyver \cite{Devyver}, Smith--Zhou \cite{SmithZhou}, and Tran \cite{Tran2016}. The extension of this framework to the space forms $\mathbb{S}^n_+$ and $\mathbb{H}^n$ was initiated by Lima--Menezes
\cite{LimaMenezes}, who replaced the Steklov problem by a Robin-type eigenvalue problem, and developed by Medvedev \cite{Medvedev}, who proved the index lower bound $\mathrm{ind}(\Sigma_a)\geq 4$ for the critical spherical catenoid in $\mathbb{H}^3$ and conjectured equality \cite[Remark 5.6]{{Medvedev}}; general upper
bounds for the index of FBMS in terms of spectral data were obtained by Lima \cite{Lima2022}. The present note contributes three analytic results that sharpen this picture for $\Sigma_a$: an exact modal computation of the Robin
nullity and index in mode $|k|=1$, refining Medvedev's lower bound from the analytic side, together with the precise asymptotic and degenerate geometry of the boundary radius $r(a)$.

The three results are facets of a single object, the Robin eigenvalue problem on the family $\{\Sigma_a\}_{a>1/2}$. Its boundary coefficient is $\coth r(a)$, so the behaviour of the boundary radius in the two limiting regimes $a\to\infty$ and $a\to(1/2)^{+}$ describes the boundary condition at the two ends of the family; this is the content of Theorem \ref{thm:asymp} and Proposition \ref{prop:degen}. By contrast,
the modal count of Proposition \ref{prop:k1} holds for every $a>1/2$, independently of these asymptotics.

Combined with the corresponding count in mode $|k|=0$,
Proposition \ref{prop:k1} would give $\mathrm{ind}(\Sigma_a)=4$ and settle Medvedev's conjecture. The mode $|k|=0$ analysis is of a different nature, since no Killing--Jacobi field is available there: the Killing fields $L_{12},L_{13},L_{23}$ preserving $B^3(r(a))$ all lie in modes $|k|\leq 1$.

\subsection{The critical spherical catenoid family}\label{ssec:family}

For each $a>1/2$ there is a one-parameter family of rotationally symmetric FBMS-annuli $\Sigma_a\subset B^3(r(a))\subset\mathbb{H}^3$, and such family is
classically known (Mori  \cite{Mori}, do Carmo--Dajczer  \cite{doCarmoDajczer}); it has been revisited by Lima--Menezes  \cite{LimaMenezes} and Medvedev  \cite{Medvedev}, who shows that $\Sigma_a$ has Morse index at least $4$ and conjectures equality.
Non-rotational FBMS-annuli in geodesic balls of $\mathbb{H}^3$ have recently been constructed via bifurcation by Cerezo  \cite{Cerezo}\footnote{The rotational surfaces called \emph{free boundary hyperbolic catenoids} by Cerezo \cite{Cerezo} are exactly the critical spherical catenoids $\Sigma_a$ studied here: in the classification of do Carmo--Dajczer \cite{doCarmoDajczer} they are the rotation hypersurfaces of \emph{spherical} type ($\delta=+1$), whose parallels are geodesic spheres. There the qualifier
``hyperbolic'' designates the ambient space $\mathbb{H}^3$, not the do Carmo--Dajczer type (for which ``hyperbolic'' is the distinct $\delta=-1$ case).}. The family $\{\Sigma_a\}_{a>1/2}$ interpolates between the ``infinitely thin'' degenerate configuration as $a\to (1/2)^+$ (where the meridian section collapses) and the ``unbounded'' configuration as $a\to\infty$
(where the boundary radius diverges).

The parametrization, in the form given by Medvedev  \cite{Medvedev},
reads
\begin{equation}\label{eq:param}
\Phi_a(s,\theta)=\bigl(A(s)\cosh\varphi(s),\,A(s)\sinh\varphi(s),\,
B(s)\cos\theta,\,B(s)\sin\theta\bigr),\qquad (s,\theta)\in\mathbb{R}\times S^1,
\end{equation}
with
\begin{equation}\label{eq:AB}
A(s)^2=a\cosh(2s)+\tfrac{1}{2},\qquad B(s)^2=a\cosh(2s)-\tfrac{1}{2},
\end{equation}
and angular profile
\begin{equation}\label{eq:varphi}
\varphi(s)=\int_0^{s}\frac{\sqrt{a^2-1/4}}{A(t)^2\,B(t)}\,dt.
\end{equation}
A direct computation (Section  \ref{sec:setup}) shows that
$\Phi_a:[-s_0,s_0]\times S^1\to\mathbb{H}^3$ is an immersion with
mean curvature identically zero, induced metric $g=ds^2+B(s)^2\,d\theta^2$,
and squared norm of the second fundamental form
\begin{equation}\label{eq:IIsq}
|\II|^2(s)=2\!\left(\frac{B''(s)}{B(s)}-1\right).
\end{equation}
The free boundary condition selects a discrete value $s_0=s_0(a)>0$ and
defines the boundary radius $r(a):=\arccosh(A(s_0)\cosh\varphi(s_0))$.

\subsection{Statement of results}

\begin{proposition}[Robin nullity in mode $|k|=1$]\label{prop:k1}
For every $a>1/2$, the eigenvalue $\lambda=0$ of the Robin Jacobi problem
\[
L_{\Sigma_a}u=\lambda u\text{ in }\Sigma_a,\qquad
\partial_\eta u=\coth(r(a))\,u\text{ on }\partial\Sigma_a,
\]
restricted to functions in the angular Fourier mode $|k|=1$ has multiplicity
exactly $2$. The $2$-dimensional kernel is spanned by the Killing--Jacobi
fields
\[
\langle L_{12},\nu\rangle_L=f_*(s)\cos\theta,\qquad
\langle L_{13},\nu\rangle_L=f_*(s)\sin\theta,
\]
where $L_{12},L_{13}\in\mathfrak{so}(3,1)$ generate the rotations of
$\mathbb{R}^{3,1}$ in the $(x_1,x_2)$- and $(x_1,x_3)$-planes,
respectively, and $f_*$ is given explicitly in \eqref{eq:fstar}.
\end{proposition}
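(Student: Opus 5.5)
Separation of variables reduces the statement to an ODE count. The metric is $g=ds^2+B(s)^2d\theta^2$, so for $u=f(s)\cos\theta$ or $u=f(s)\sin\theta$ the Robin problem with $\lambda=0$ becomes the radial problem
\[
f''+\frac{B'}{B}f'+\Bigl(|\II|^2-2-\frac{1}{B^2}\Bigr)f=0\ \text{on }(-s_0,s_0),\qquad \pm f'(\pm s_0)=\coth(r(a))\,f(\pm s_0).
\]
Equivalently, in Sturm--Liouville form, $(Bf')'+B(|\II|^2-2-B^{-2})f=0$. The Robin nullity in mode $|k|=1$ is therefore twice the dimension of the solution space of this boundary value problem: once for $\cos\theta$ and once for $\sin\theta$. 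So the task is to show that the radial problem has exactly a one-dimensional space of solutions, spanned by $f_*$.

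\emph{Existence.} Rotations of $\mathbb{R}^{3,1}$ in the $(x_1,x_2)$- and $(x_1,x_3)$-planes are isometries of $\mathbb{H}^3$ that fix $p_0$. Hence they preserve $B^3(r(a))$, and their normal components $\langle L_{1j},\nu\rangle_L$ are Jacobi fields. The Robin condition $\partial_\eta u=\coth(r)u$ is the linearization of the free boundary condition along isometries that preserve the sphere, so these fields satisfy it; I would verify this through the standard computation that the second fundamental form of $\partial B^3(r)$ equals $\coth r$ times its metric. To make this concrete, I compute $\nu$ from \eqref{eq:param}. A rotation in the $(x_1,x_2)$-plane moves the point $(A\sinh\varphi,B\cos\theta)$, and pairing with $\nu$ yields $f_*(s)\cos\theta$. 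Comparing the result with $\partial_s(A\cosh\varphi)$ identifies $f_*=\sinh r(s)\,r'(s)$. Since $\cosh r(s)=A\cosh\varphi$ is even in $s$, the function $f_*$ is odd and nontrivial. The $(x_1,x_3)$ rotation gives the $\sin\theta$ partner.

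\emph{Upper bound.} The radial equation is second order, so its solution space on the interval has dimension $2$. If the nullity exceeded one, every solution would satisfy both Robin conditions. The equation has coefficients that are even in $s$, because $B$ and $|\II|^2$ are even. Its solutions therefore split into an odd solution $f_*$ and an even solution $f_e$, and it suffices to show that $f_e$ fails the Robin condition at $s_0$. I expect this to be the main obstacle. My first attempt uses the Wronskian $W=B(f_*'f_e-f_*f_e')$, which is constant and nonzero. If both $f_*$ and $f_e$ satisfied the same Robin condition at $s_0$, then the boundary Wronskian $f_*'f_e-f_*f_e'$ would vanish there, a contradiction. This is just the fact that a regular Sturm--Liouville problem with a single separated boundary condition at each endpoint has simple eigenvalues. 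So the argument already closes: the kernel in each of $\cos\theta$, $\sin\theta$ is at most one-dimensional, giving multiplicity exactly $2$.

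The argument relies on two facts worth checking. First, $B>0$ on $[-s_0,s_0]$, which holds since $a>1/2$, so the problem is regular. Second, $f_*\not\equiv0$, which holds because $r'(s)>0$ for $s>0$. Once the radial reduction is written in self-adjoint form, the simplicity argument is routine. The real computational content is the identification of the Killing--Jacobi field with $\partial_s(A\cosh\varphi)$, which requires an explicit unit normal $\nu$ in Minkowski coordinates. I would carry that computation out carefully, using $A^2-B^2=1$ and $\varphi'=\sqrt{a^2-1/4}/(A^2B)$.
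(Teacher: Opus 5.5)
Your proposal is correct and follows essentially the same route as the paper: separation of variables reduces the problem to the radial Robin problem, the Killing--Jacobi fields of $L_{12}$ and $L_{13}$ give the odd solution $f_*=\partial_s(A\cosh\varphi)$, and the constancy of $B(s)W(s)$, combined with its vanishing at $s_0$ under the common Robin condition, rules out a second (even) solution. You are right that the parity split is inessential, since the argument is just simplicity of eigenvalues for a regular Sturm--Liouville problem with separated boundary conditions; and both your sketch and the paper leave the Robin condition for the Killing--Jacobi fields as a standard verification.
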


\begin{theorem}[Asymptotic radius]\label{thm:asymp}
The map $a\mapsto r(a)$ is real-analytic on $(1/2,\infty)$, and as
$a\to\infty$ it satisfies
\begin{equation}\label{eq:asymp}
r(a)=\tfrac{3}{2}\log a+d_\infty+o(1),
\end{equation}
where the constant $d_\infty$ is given by the closed form
\begin{equation}\label{eq:dinfty}
d_\infty=\log\!\frac{\sqrt{2}\,\Gamma(1/4)^2}{\pi^{3/2}}=\log\!\frac{2\sqrt{2\pi}}{\Gamma(3/4)^2}.
\end{equation}
\end{theorem}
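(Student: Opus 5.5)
The plan is to turn the free boundary condition into an explicit scalar equation $G(s_0,a)=0$, prove real-analyticity with the implicit function theorem, and get the $a\to\infty$ asymptotics by rescaling that equation.

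\emph{Step 1: the free boundary condition as a critical point.} Since $\cosh \mathrm{dist}(p_0,x)=-\langle x,p_0\rangle_L$, the boundary $\partial\Sigma_a$ meets $\partial B^3(r)$ orthogonally exactly when the unit normal $\nu$ is tangent to the sphere. This means $\langle\nu,p_0\rangle_L=0$ along $s=\pm s_0$. By rotational symmetry we may take $\theta=0$ and write $\nu=(\nu_0,\nu_1,\beta\cos\theta,\beta\sin\theta)$. The condition $\nu_0=0$, together with $\langle\nu,\Phi\rangle_L=\langle\nu,\partial_s\Phi\rangle_L=0$, forces the $2\times2$ determinant
\[
(A\sinh\varphi)'B-(A\sinh\varphi)B'
\]
to vanish. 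So the free boundary condition reads
\[
G(s,a):=\partial_s\Bigl(\tfrac{A(s)\sinh\varphi(s)}{B(s)}\Bigr)=0 .
\]
The quantity $A\sinh\varphi/B$ is odd in $s$, vanishes at $0$ and decays to $0$ as $s\to\infty$, because $\varphi$ is bounded and $A/B\to1$ while $\sinh\varphi$ stays bounded. I will show that it has a unique positive critical point $s_0(a)$, which is a strict maximum. Differentiating gives a first-order expression in $\varphi'$, and $\varphi'$ is explicit from \eqref{eq:varphi}; the uniqueness and strictness should follow from this explicit form.

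\emph{Step 2: real-analyticity.} All the data $A,B,\varphi$ are real-analytic in $(s,a)$ on $s>0$, $a>1/2$. Nondegeneracy gives $\partial_sG(s_0,a)\neq0$, so the analytic implicit function theorem makes $a\mapsto s_0(a)$ analytic. Then $r(a)=\arccosh(A(s_0)\cosh\varphi(s_0))$ is analytic, since its argument exceeds $1$.

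\emph{Step 3: asymptotics as $a\to\infty$.} Set $c=a\cosh 2s$. Then
\[
\tfrac{d}{ds}\log\tfrac{A}{B}=-\frac{a\sinh 2s}{c^2-1/4},\qquad
\varphi'(s)=\frac{\sqrt{a^2-1/4}}{(c+\tfrac12)\sqrt{c-\tfrac12}},
\]
so $\varphi'(s)=a^{-1/2}\cosh(2s)^{-3/2}(1+O(a^{-1}))$ uniformly in $s$. Hence $\varphi(s)\le C a^{-1/2}$, and $\varphi(s)=a^{-1/2}(I_\infty+o(1))$ once $s\to\infty$. The condition $G=0$ is $\varphi'\coth\varphi=-\frac{d}{ds}\log\frac AB$. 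In leading order this becomes
\[
\frac{a^{-1/2}\cosh(2s_0)^{-3/2}}{\varphi(s_0)}=\frac{\tanh 2s_0}{a\cosh 2s_0}.
\]
A first rough bound shows $s_0\to\infty$; then this gives $\cosh(2s_0)^{1/2}=\frac{a}{I_\infty}(1+o(1))$. Substituting into $\cosh r=A(s_0)\cosh\varphi(s_0)=\sqrt{a\cosh 2s_0}\,(1+o(1))$ yields
\[
r(a)=\log\frac{2a^{3/2}}{I_\infty}+o(1),\qquad d_\infty=\log\frac{2}{I_\infty}.
\]
For the integral, substitute $u=\cosh(2t)^{-2}$, or use $\int_0^\infty\cosh^{-\mu}x\,dx=\tfrac12B(\tfrac\mu2,\tfrac12)$. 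This gives $I_\infty=\tfrac14B(\tfrac34,\tfrac12)=\frac{\sqrt\pi\,\Gamma(3/4)}{4\Gamma(5/4)}$. Reflection $\Gamma(1/4)\Gamma(3/4)=\pi\sqrt2$ and duplication then convert $2/I_\infty$ into both forms of \eqref{eq:dinfty}.

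\emph{Main obstacle.} The hardest part is making the error terms in Step 3 uniform, since $s_0$ itself diverges. My plan is to first establish a priori bounds $c_1\log a\le s_0(a)\le c_2\log a$ by sign arguments on $G$. On that range I will replace every approximation by an explicit relative error $O(a^{-1})+O(e^{-4s})$. The tail $\int_{s_0}^\infty\cosh(2t)^{-3/2}\,dt$ is then $O(a^{-3})$, which keeps $\varphi(s_0)$ from differing from $a^{-1/2}I_\infty$ at leading order.
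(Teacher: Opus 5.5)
Your proposal follows essentially the paper's route. Your condition $G=0$ is the paper's free boundary equation \eqref{eq:fb-explicit} in another form, because a direct computation gives
\[
G(s,a)=\partial_s\Bigl(\frac{A\sinh\varphi}{B}\Bigr)=\frac{BK\cosh\varphi-a\sinh(2s)\sinh\varphi}{AB^3}=\frac{\nu^0(s,0)}{B(s)^2}.
\]
The remaining steps match the paper up to cosmetic differences:
\begin{itemize}
\item the leading balance $\varphi(s_0)\simeq I_\infty/\sqrt a$;
\item inserting the result into $\cosh r=A(s_0)\cosh\varphi(s_0)$, where you solve for $\cosh 2s_0$ and the paper solves for $e^{s_0}$;
\item the Beta evaluation, where you substitute $u=\cosh(2t)^{-2}$ and the paper substitutes $u=e^{-4t}$;
\item the implicit function theorem for analyticity.
\end{itemize}
Your plan to prove a priori bounds forcing $s_0\to\infty$ is more careful than the paper. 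Lemma~\ref{lem:s0-asymp} simply assumes $s_0\to\infty$.

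One claim in Step~1 is false and needs repair. The function $A\sinh\varphi/B$ does not decay to $0$. Since $A/B\to1$ and $\varphi$ increases to $\varphi(\infty)>0$, it tends to $\sinh\varphi(\infty)>0$; your own Step~3 gives $\varphi(\infty)\approx I_\infty/\sqrt a$. So the argument ``vanishes at both ends, hence has an interior maximum'' proves nothing, since a function from $0$ to a positive limit can be monotone. Separately, a strict maximum alone would not give $\partial_sG(s_0,a)\neq0$, which your implicit function theorem step needs.

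The fix is to factor, for $s>0$,
\[
G=\frac{a\sinh(2s)\cosh\varphi}{AB^3}\bigl(R(s)-\tanh\varphi(s)\bigr),\qquad R(s)=\frac{B(s)K}{a\sinh(2s)}.
\]
\begin{itemize}
\item \emph{Existence.} $R-\tanh\varphi$ runs from $+\infty$ at $0^+$ to $-\tanh\varphi(\infty)<0$ at infinity, so it has a zero.
\item \emph{Uniqueness and nondegeneracy.} Since $\varphi'>0$ and $R'<0$, the function $R-\tanh\varphi$ is strictly decreasing. This gives uniqueness of $s_0$ and $\partial_sG(s_0,a)<0$. The fact $R'<0$ is the discriminant argument for $a\xi^2-\xi+a$ in Lemma~\ref{lem:coord}.
\item \emph{A priori bounds.} The same single sign change is what your ``sign arguments on $G$'' need. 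For fixed $S$ and large $a$, $R(S)\to\infty$ while $\tanh\varphi(S)\le Ca^{-1/2}$, so $G(S,a)>0$ and hence $s_0>S$.
\end{itemize}
With this repair, the rest of your argument goes through.
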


\begin{proposition}[Degenerate limit]\label{prop:degen}
As $a\to(1/2)^+$,
\begin{equation}\label{eq:degen-leading}
r(a)=c_*\sqrt{a-1/2}\,\bigl(1+o(1)\bigr),\qquad c_*:=\rho_*+\frac{1}{\rho_*},
\end{equation}
where $\rho_*$ is the unique positive root of the transcendental
equation
\begin{equation}\label{eq:rho-eq}
\arcsinh(\rho)=\frac{\sqrt{1+\rho^2}}{\rho}.
\end{equation}
In particular $r(a)\to 0$ at rate $r(a)=O\bigl(\sqrt{a-1/2}\bigr)$.
\end{proposition}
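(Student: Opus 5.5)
The plan is to make the free boundary condition explicit in the meridian plane $\{x_3=0\}$, i.e. along the curve $\theta=0$, and then pass to the blow-up scale $s=\sqrt{\varepsilon}\,\sigma$, with $\varepsilon:=a-1/2\to0^+$.

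\emph{Step 1: the free boundary condition as a stationarity condition.} At $x=\Phi_a(s,0)=(A\cosh\varphi,A\sinh\varphi,B,0)$ we have $\cosh r(s)=-\langle x,p_0\rangle_L=x_0$. The outward unit normal to $\partial B^3(r)$ at $x$ lies in $\mathrm{span}\{x,p_0\}$. Since $\Sigma_a$ is rotationally symmetric, $\partial_\theta\Phi_a$ is automatically tangent to the sphere. Orthogonality therefore means that the conormal $\partial_s\Phi_a(s_0,0)$ lies in $\mathrm{span}\{x,p_0\}$. Equivalently, the direction of the spatial part $(A\sinh\varphi,B,0)$ is stationary in $s$, which reads
\[
\frac{B'(s_0)}{B(s_0)}=\frac{(A\sinh\varphi)'(s_0)}{A(s_0)\sinh\varphi(s_0)} .
\]
I will take $s_0(a)$ to be the smallest positive root of this equation, in accordance with the selection of $s_0$ in \S\ref{ssec:family}.

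\emph{Step 2: rescaling.} Put $s=\sqrt{\varepsilon}\,\sigma$. From \eqref{eq:AB} we get
\[
B^2=\varepsilon+a(\cosh 2s-1)=\varepsilon\bigl(1+\sigma^2\bigr)\bigl(1+O(\varepsilon)\bigr)
\]
and $A^2=1+O(\varepsilon)$, uniformly for $\sigma$ in compact sets. Since $\sqrt{a^2-1/4}=\sqrt{\varepsilon}\,(1+O(\varepsilon))$, \eqref{eq:varphi} gives
\[
\varphi=\sqrt{\varepsilon}\,\bigl(\arcsinh\sigma+O(\varepsilon)\bigr),
\]
with derivative bounds in $\sigma$ of the same order. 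Substituting into Step 1 and multiplying by $\sqrt{\varepsilon}$, the condition becomes $F_\varepsilon(\sigma)=0$, where $F_\varepsilon\to F_0$ in $C^1_{\rm loc}(0,\infty)$ and
\[
F_0(\sigma)=\frac{\sigma}{1+\sigma^2}-\frac{1}{\sqrt{1+\sigma^2}\,\arcsinh\sigma}.
\]
Thus $F_0=0$ is exactly \eqref{eq:rho-eq}.

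\emph{Step 3: the limiting equation and passage to the limit.} Let $h(\rho)=\rho\arcsinh\rho-\sqrt{1+\rho^2}$. Then $h'(\rho)=\arcsinh\rho>0$ for $\rho>0$, $h(0)=-1$, and $h\to\infty$. Hence $\rho_*$ exists, is unique, and is a simple zero, so $F_0'(\rho_*)\neq0$. The implicit function theorem, or just Hurwitz/Rouché-type $C^1$ stability of a simple zero, then gives a root $\sigma_\varepsilon\to\rho_*$ of $F_\varepsilon$.

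\emph{Step 4: computing the radius.} Since $r$ is small, $r(a)^2=(A\sinh\varphi)^2+B^2+O(r^4)$. At $s_0$ this yields
\[
r(a)^2=\varepsilon\Bigl(\arcsinh^2\rho_*+1+\rho_*^2\Bigr)(1+o(1))=\varepsilon\,\frac{(1+\rho_*^2)^2}{\rho_*^2}(1+o(1)),
\]
where we used \eqref{eq:rho-eq}. This is \eqref{eq:degen-leading} with $c_*=\rho_*+1/\rho_*$.

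\emph{Main obstacle.} The delicate point is to show that the selected $s_0(a)$ really lives on the scale $\sqrt{\varepsilon}$, i.e. that $\sigma_\varepsilon=s_0/\sqrt{\varepsilon}$ stays in a compact subset of $(0,\infty)$, so that the local convergence in Step 2 applies. I would handle this by proving two uniform sign statements for the rescaled condition:
\begin{itemize}[leftmargin=*]
\item as $\sigma\to0$, $F_\varepsilon(\sigma)<0$ uniformly in $\varepsilon$, because $\arcsinh\sigma\sim\sigma$;
\item for $\sigma\ge R$ with $R$ large, up to $s$ of order one, $F_\varepsilon(\sigma)>0$ uniformly, by direct estimates on the unscaled expressions in the range $\sqrt{\varepsilon}R\le s\le s_1$.
\end{itemize}
Together with the uniqueness of the root of $F_0$, these force the first root to converge to $\rho_*$.
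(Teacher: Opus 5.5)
Your proposal is correct and follows essentially the paper's route. The Step~1 stationarity condition is exactly the paper's equation $\tanh\varphi(s_0)=B(s_0)K(a)/(a\sinh 2s_0)$ once you clear denominators and use $A^2-B^2=1$. The remaining steps also match: the rescaling $s_0=\rho\sqrt{\varepsilon}$, the limiting equation \eqref{eq:rho-eq}, the simple-zero/implicit-function argument, and reading off $r$ from $\cosh r(a)=\Phi^0_a(s_0,\cdot)$ (your identity $\sinh^2 r=(A\sinh\varphi)^2+B^2$ is the same relation).

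The localization you call the main obstacle is settled more cheaply by the global uniqueness of the positive root in Lemma~\ref{lem:coord}, where $\tanh\varphi$ is increasing and $R$ is decreasing. Any root found near $\rho_*\sqrt{\varepsilon}$ is therefore $s_0$, so neither uniform sign estimate is needed. For the smallest root, the large-$\sigma$ estimate would have been superfluous in any case.
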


\begin{remark}\label{rem:sigma-coth}
The transcendental equation \eqref{eq:rho-eq} simplifies under the substitution $\sigma:=\arcsinh\rho$, equivalently
$\rho=\sinh\sigma$ and $\sqrt{1+\rho^2}=\cosh\sigma$:
\eqref{eq:rho-eq} becomes
\begin{equation}\label{eq:sigma-coth}
\sigma=\coth\sigma,
\end{equation}
the (unique) positive fixed point of the hyperbolic cotangent. In these variables the leading constant simplifies further: using $\rho_*^2+1=\cosh^2\sigma_*$ and $\sigma_*\sinh\sigma_*=\cosh\sigma_*$
from \eqref{eq:sigma-coth},
\begin{equation}\label{eq:c-cosh}
c_*=\rho_*+\rho_*^{-1}=\frac{\cosh^2\sigma_*}{\sinh\sigma_*}=\sigma_*\,\cosh\sigma_*.
\end{equation}
The map $\sigma\mapsto\sigma-\coth\sigma$ is real-analytic and strictly
increasing on $(0,\infty)$ with limits $-\infty$ and $+\infty$ at the
endpoints, giving uniqueness of $\sigma_*$ (and hence of
$\rho_*=\sinh\sigma_*$).
\end{remark}

The two equivalent forms of $d_\infty$ in \eqref{eq:dinfty} are related
by the reflection identity $\Gamma(1/4)\,\Gamma(3/4)=\pi\sqrt{2}$.

\begin{remark}
Proposition  \ref{prop:k1} extends to the hyperbolic ambient the mode-by-mode analysis of Devyver  \cite[\S 3.3]{Devyver}, originally carried out for the Euclidean critical catenoid using the $\mathfrak{so}(3)$-Killing fields. In the hyperbolic setting, only those elements of $\mathfrak{so}(3,1)$ that preserve the bounding geodesic ball $B^3(r(a))$ produce admissible Jacobi fields; this restricts the
relevant Killing subalgebra from a $6$-dimensional one to a
$3$-dimensional one, and the precise computation of which Killing
fields contribute to which angular modes occupies
Section  \ref{sec:k1}.
\end{remark}

\begin{remark}\label{rem:dasymp-error}
The closed form \eqref{eq:dinfty} is the only step of the proof of
Theorem  \ref{thm:asymp} that is non-elementary in the sense of relying
on the Beta function evaluation $\int_0^\infty
\cosh(2t)^{-3/2}\,dt=\Gamma(3/4)^2/\sqrt{2\pi}$. The remainder $o(1)$ in
\eqref{eq:asymp} can be improved to $O(a^{-1})$ at the expense of a
slightly longer asymptotic expansion; we record only the leading-order
constant.
\end{remark}

\subsection{Organization}

Section  \ref{sec:setup} develops the geometric setup and verifies the
minimality of $\Sigma_a$. Section  \ref{sec:killing} introduces the
relevant Killing--Jacobi fields. Section  \ref{sec:k1} contains the proof
of Proposition  \ref{prop:k1}. Section  \ref{sec:asymp} is devoted to the
asymptotic analysis and the proof of Theorem  \ref{thm:asymp};
Proposition  \ref{prop:degen} is established along the way.

\section{Geometric setup}\label{sec:setup}

In this section we establish the basic geometric properties of
$\Sigma_a$ used throughout the paper.

\subsection{Coordinate identities}

Throughout we use the parametrization \eqref{eq:param}--\eqref{eq:varphi}
with $a>1/2$. Denote $K(a):=\sqrt{a^2-1/4}$; this is the conserved
quantity associated with the angular momentum of the rotational
generator.

\begin{lemma}[Induced metric]\label{lem:metric}
The pull-back $g=\Phi_a^*g_{\mathbb{H}^3}$ is the diagonal metric
\begin{equation}
g=ds^2+B(s)^2\,d\theta^2
\end{equation}
on $\mathbb{R}\times S^1$.
\end{lemma}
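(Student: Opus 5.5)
We compute the pull-back directly in Minkowski space, since the metric of $\mathbb{H}^3$ is the restriction of $\langle\cdot,\cdot\rangle_L$. Differentiating \eqref{eq:param} gives
\[
\partial_s\Phi_a=\bigl(A'\cosh\varphi+A\varphi'\sinh\varphi,\;A'\sinh\varphi+A\varphi'\cosh\varphi,\;B'\cos\theta,\;B'\sin\theta\bigr),
\]
\[
\partial_\theta\Phi_a=(0,0,-B\sin\theta,B\cos\theta).
\]
The plan is to compute the three entries $g_{ss}$, $g_{s\theta}$, $g_{\theta\theta}$ in turn, using the hyperbolic identity $\cosh^2-\sinh^2=1$ in the $(x_0,x_1)$ block and $\cos^2+\sin^2=1$ in the $(x_2,x_3)$ block.

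For the off-diagonal and angular entries, $g_{s\theta}=-B'B\cos\theta\sin\theta+B'B\sin\theta\cos\theta=0$. Similarly $g_{\theta\theta}=B^2$. Both are immediate.

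For $g_{ss}$, the $(x_0,x_1)$ block contributes $-(A'\cosh\varphi+A\varphi'\sinh\varphi)^2+(A'\sinh\varphi+A\varphi'\cosh\varphi)^2$. The cross terms cancel, leaving $-A'^2+A^2\varphi'^2$. The $(x_2,x_3)$ block contributes $B'^2$. Hence
\[
g_{ss}=B'^2-A'^2+A^2\varphi'^2,
\]
and the only real task is to show this equals $1$.

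From \eqref{eq:AB}, $AA'=BB'=a\sinh(2s)$. This gives $A'^2=a^2\sinh^2(2s)/A^2$ and $B'^2=a^2\sinh^2(2s)/B^2$. From \eqref{eq:varphi}, $A^2\varphi'^2=K^2/(A^2B^2)$. Since $A^2-B^2=1$,
\[
B'^2-A'^2=a^2\sinh^2(2s)\,\frac{A^2-B^2}{A^2B^2}=\frac{a^2\sinh^2(2s)}{A^2B^2}.
\]
Therefore $g_{ss}=\bigl(a^2\sinh^2(2s)+a^2-\tfrac14\bigr)/(A^2B^2)$. Using $A^2B^2=a^2\cosh^2(2s)-\tfrac14$ and $\cosh^2-\sinh^2=1$, the numerator equals $A^2B^2$, so $g_{ss}=1$.

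This verification of $g_{ss}=1$ is the only nontrivial step. It is exactly where the specific choice of the constant $K(a)=\sqrt{a^2-1/4}$ in \eqref{eq:varphi} is used, and it also confirms that $s$ is an arclength parameter along the meridians. As a sanity check, we should also verify that $\Phi_a$ lies on $\mathbb{H}^3$: $-A^2\cosh^2\varphi+A^2\sinh^2\varphi+B^2=-A^2+B^2=-1$, with $x_0>0$.
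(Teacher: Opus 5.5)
Your proposal is correct and follows essentially the same route as the paper: a direct Minkowski computation of $g_{ss}$, $g_{s\theta}$ and $g_{\theta\theta}$ using $A^2-B^2=1$, $AA'=BB'=a\sinh(2s)$ and $\varphi'=K/(A^2B)$. You additionally carry out the final algebraic check $g_{ss}=\bigl(a^2\sinh^2(2s)+K^2\bigr)/(A^2B^2)=1$, which the paper only asserts; implicitly this uses $B^2\ge a-\tfrac12>0$, which makes the divisions by $B$ legitimate on all of $\mathbb{R}\times S^1$.
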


\begin{proof}
A direct computation, repeatedly using $A^2-B^2=1$ (which follows from
\eqref{eq:AB}), $\varphi'(s)=K/(A^2 B)$, and the Minkowski inner product:
\[
\langle\partial_s\Phi,\partial_s\Phi\rangle_L
=-(\partial_s\Phi^0)^2+(\partial_s\Phi^1)^2+(\partial_s\Phi^2)^2+(\partial_s\Phi^3)^2.
\]
Using
$\partial_s\Phi^0=A'\cosh\varphi+A\sinh\varphi\cdot\varphi'$ and
$\partial_s\Phi^1=A'\sinh\varphi+A\cosh\varphi\cdot\varphi'$, one obtains
\[
-(\partial_s\Phi^0)^2+(\partial_s\Phi^1)^2=-(A')^2+A^2(\varphi')^2.
\]
Together with $(\partial_s\Phi^2)^2+(\partial_s\Phi^3)^2=(B')^2$, the
total reduces, after substituting $A'=a\sinh(2s)/A$,
$B'=a\sinh(2s)/B$ and $\varphi'=K/(A^2 B)$, to
\[
g_{ss}=-(A')^2+A^2(\varphi')^2+(B')^2=1,
\]
where the last equality is a direct algebraic verification using
$A^2-B^2=1$ and $K^2=a^2-1/4$. The off-diagonal term vanishes by symmetry,
and $g_{\theta\theta}=B(s)^2$ by inspection.
\end{proof}

\begin{lemma}[Minimality and second fundamental form]\label{lem:minimal}
The immersion $\Phi_a$ has identically vanishing mean curvature, and
\begin{equation}
|\II|^2(s)=2\!\left(\frac{B''(s)}{B(s)}-1\right).
\end{equation}
\end{lemma}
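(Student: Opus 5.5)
We prove minimality and the formula for $|\II|^2$ together by working directly in the Minkowski model: build an explicit unit normal $\nu$, then read off the two principal curvatures from the second derivatives of $\Phi_a$.

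\textbf{Step 1: frame and normal.} The tangent frame is $e_1=\partial_s\Phi$ and $e_2=B^{-1}\partial_\theta\Phi$. By Lemma \ref{lem:metric} it is orthonormal. Since $\Phi$ is the position vector, the normal $\nu\in T_\Phi\mathbb{H}^3$ must satisfy $\langle\nu,\Phi\rangle_L=\langle\nu,e_1\rangle_L=\langle\nu,e_2\rangle_L=0$ and $\langle\nu,\nu\rangle_L=1$. Because $e_2=(0,0,-\sin\theta,\cos\theta)$, we look for $\nu$ in the form
\[
\nu=\bigl(\alpha\cosh\varphi+\beta\sinh\varphi,\ \alpha\sinh\varphi+\beta\cosh\varphi,\ \gamma\cos\theta,\ \gamma\sin\theta\bigr),
\]
with $\alpha,\beta,\gamma$ depending only on $s$. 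The three remaining orthogonality and normalization conditions give a small linear-algebra problem in $(\alpha,\beta,\gamma)$. Using $A^2-B^2=1$ and $K=\sqrt{a^2-1/4}$, we expect a closed form such as $\beta\propto B'$, $\gamma\propto A^2\varphi'$, and so on, which we verify directly.

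\textbf{Step 2: second fundamental form.} In $\mathbb{H}^3\subset\mathbb{R}^{3,1}$ the ambient connection is the flat derivative projected off $\Phi$. Since $\langle\Phi,\nu\rangle_L=0$, this gives $h_{ij}=\langle\partial_i\partial_j\Phi,\nu\rangle_L$. The mixed term is $h_{12}=0$, because $\partial_s\partial_\theta\Phi$ lies in the $(x_2,x_3)$-plane along $e_2$. For the angular term, $\partial_\theta^2\Phi=-(0,0,B\cos\theta,B\sin\theta)$, so
\[
\kappa_2=h_{22}/B^2=-\gamma/B .
\]
The meridian curvature $\kappa_1=\langle\partial_s^2\Phi,\nu\rangle_L$ requires $A''$, $B''$ and $\varphi''$. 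We would rather not expand these. A cleaner route is to use the Gauss equation in $\mathbb{H}^3$:
\[
K_g=-1+\kappa_1\kappa_2,\qquad K_g=-B''/B \text{ for } g=ds^2+B^2d\theta^2 .
\]
Minimality $\kappa_1=-\kappa_2$ then gives $\kappa_1\kappa_2=-\kappa_2^2=1-B''/B$, and hence
\[
|\II|^2=2\kappa_2^2=2\left(\frac{B''}{B}-1\right).
\]

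\textbf{Step 3: minimality.} This reduces the whole statement to proving $\kappa_1+\kappa_2=0$. The form of $\varphi'$ in \eqref{eq:varphi} is precisely the first integral of the rotational minimal surface ODE, so minimality should follow by differentiating the constraint $g_{ss}=1$ once. Concretely, the identity $-(A')^2+A^2\varphi'^2+(B')^2=1$, differentiated in $s$, produces a relation among $A''$, $B''$ and $\varphi''$ that we substitute into $\langle\partial_s^2\Phi,\nu\rangle_L$. We then check that the result equals $\gamma/B$. Alternatively, we can compute directly with $A'=a\sinh 2s/A$, $B'=a\sinh 2s/B$ and $\varphi'=K/(A^2B)$, all rational in $\cosh 2s$, and simplify.

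The main obstacle is Step 3: the explicit algebraic cancellation showing $H=0$, together with getting a clean expression for $\nu$. We expect $\gamma$ to reduce to something like $\pm K/(AB)$, which would give $\kappa_2^2=K^2/(A^2B^4)$ up to sign conventions. This last expression also serves as an independent consistency check against $2(B''/B-1)$, since
\[
B''=\frac{a^2\cosh^2 2s-1/4}{B^3}+\dots
\]
can be computed explicitly from \eqref{eq:AB}.
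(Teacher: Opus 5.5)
Your Step 2 is exactly the paper's argument for the curvature formula: the Gauss equation in $\mathbb{H}^3$, $K_g=-B''/B$ for $ds^2+B^2d\theta^2$, and $\kappa_1=-\kappa_2$. The difference is minimality. The paper does not prove it; it quotes it as classical (Mori, do Carmo--Dajczer, Medvedev). You instead set out to verify it from the explicit formulas \eqref{eq:AB}--\eqref{eq:varphi}.

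Your Step 3 is only a plan, but it closes, and more cheaply than you expect. Solving your linear system gives $\alpha=\gamma B/A$, $\beta=-\gamma\,a\sinh(2s)/(AK)=-B'/A$, and $\gamma=A^2\varphi'=K/B$. This is not the $\pm K/(AB)$ you guessed, and with that guess your consistency check would fail. The correct value is $\kappa_2=-\gamma/B=-K/B^2$, so $\kappa_2^2=K^2/B^4$.

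For $\kappa_1$ you need neither $A''$ nor $\varphi''$. Your idea of differentiating $g_{ss}=1$ is the right one, but use it to remove the tangential part of $\partial_s^2\Phi$ rather than substituting into $\langle\partial_s^2\Phi,\nu\rangle_L$:
\begin{itemize}
\item $\langle\partial_s^2\Phi,\partial_s\Phi\rangle_L=0$, from differentiating $g_{ss}=1$;
\item $\langle\partial_s^2\Phi,e_2\rangle_L=0$, by inspection;
\item $\langle\partial_s^2\Phi,\Phi\rangle_L=-g_{ss}=-1$.
\end{itemize}
Hence $\partial_s^2\Phi=\Phi+\kappa_1\nu$. Reading off the $x_2$-component at $\theta=0$ gives $B''=B+\kappa_1K/B$, so $\kappa_1=B(B''-B)/K$. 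Then $\kappa_1+\kappa_2=0$ is equivalent to $B^3(B''-B)=K^2$. This is exactly the purely algebraic identity $B^4(B''/B-1)=K^2$ derived from \eqref{eq:AB} in the proof of Lemma~\ref{lem:conserved}, which does not use minimality.

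Your route therefore gives a self-contained proof of $H=0$ at the cost of a short computation. It even yields $|\II|^2=\kappa_1^2+\kappa_2^2=2K^2/B^4$ directly, which makes the Gauss-equation step optional. The paper's route is shorter but relies on the literature for the minimality that its curvature formula depends on.
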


\begin{proof}
Minimality of $\Sigma_a$ is a classical fact (Mori  \cite{Mori},
do Carmo--Dajczer  \cite{doCarmoDajczer}, Medvedev  \cite{Medvedev}); we
take it as known.

For  \eqref{eq:IIsq}: let $\nu$ be a unit normal to $\Sigma_a$ in
$\mathbb{H}^3$. Set
$\kappa_s=\II(\partial_s,\partial_s)/g_{ss}$ and
$\kappa_\theta=\II(\partial_\theta,\partial_\theta)/g_{\theta\theta}$,
the principal curvatures. By the Gauss equation in $\mathbb{H}^3$
(sectional curvature $-1$):
\[
K_{\Sigma}=-1+\kappa_s\kappa_\theta.
\]
On the other hand, for the diagonal metric $g=ds^2+B^2 d\theta^2$ the
intrinsic Gauss curvature is the standard expression $K_\Sigma=-B''/B$.
Hence $\kappa_s\kappa_\theta=1-B''/B$. By minimality $\kappa_s+\kappa_\theta=0$,
i.e. $\kappa_s=-\kappa_\theta$, so $\kappa_\theta^2=B''/B-1$, and
$|\II|^2=\kappa_s^2+\kappa_\theta^2=2(B''/B-1)$, giving \eqref{eq:IIsq}.
\end{proof}

\begin{lemma}[Conserved quantity identity]\label{lem:conserved}
On $\Sigma_a$,
\begin{equation}\label{eq:conserved}
|\II|^2(s)\,B(s)^4=2K^2,
\end{equation}
i.e.\ $|\II|^2(s)=2K^2/B(s)^4$.
\end{lemma}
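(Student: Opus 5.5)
The plan is to derive \eqref{eq:conserved} by substituting the explicit profile \eqref{eq:AB} into the formula $|\II|^2=2(B''/B-1)$ of Lemma~\ref{lem:minimal}. With that formula, the claim is equivalent to the single ODE identity
\[
\frac{B''}{B}-1=\frac{K^2}{B^4},\qquad\text{i.e.}\qquad BB''=B^2+\frac{K^2}{B^2},
\]
where $K^2=a^2-\tfrac14$. No geometry beyond Lemma~\ref{lem:minimal} is needed. The idea is to avoid computing $B''$ directly from the square root. Instead I will differentiate $B^2=a\cosh(2s)-\tfrac12$ twice and express everything through $B^2$ itself.

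\emph{Step 1 (first derivative).} From $(B^2)'=2a\sinh(2s)$ we get $B'=a\sinh(2s)/B$. Using $\sinh^2=\cosh^2-1$,
\[
(B')^2=\frac{a^2\cosh^2(2s)-a^2}{B^2}.
\]
Now substitute $a\cosh(2s)=B^2+\tfrac12$, so that $a^2\cosh^2(2s)=B^4+B^2+\tfrac14$. This yields
\[
(B')^2=B^2+1-\frac{K^2}{B^2}.
\]
This is the first integral of the profile, and it is where the conserved quantity $K$ enters.

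\emph{Step 2 (second derivative).} Differentiating $(B^2)'$ once more gives $(B^2)''=4a\cosh(2s)$. Expanding the left side, $(B^2)''=2(B')^2+2BB''$. Hence
\[
BB''=2a\cosh(2s)-(B')^2=2B^2+1-\Bigl(B^2+1-\frac{K^2}{B^2}\Bigr)=B^2+\frac{K^2}{B^2}.
\]
Dividing by $B^2$ gives $B''/B-1=K^2/B^4$.

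\emph{Step 3 (conclusion).} Multiplying by $2$ and invoking Lemma~\ref{lem:minimal} gives $|\II|^2=2K^2/B^4$, which is \eqref{eq:conserved}.

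There is no real obstacle; the only care needed is the bookkeeping in Step~1. One must not forget the constant $\tfrac14$ in $a^2\cosh^2(2s)$, since it is exactly what combines with $-a^2$ into $-K^2$. The division by $B$ is legitimate because $B^2\geq a-\tfrac12>0$ for $a>1/2$.
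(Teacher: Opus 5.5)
Your proof is correct and is essentially the paper's argument. Both differentiate $B^2=a\cosh(2s)-\tfrac12$ twice to get $BB''=2B^2+1-(B')^2$, then use $\cosh^2-\sinh^2=1$ and $(a\cosh(2s)-\tfrac12)(a\cosh(2s)+\tfrac12)=a^2\cosh^2(2s)-\tfrac14$ before invoking \eqref{eq:IIsq}. The only difference is the order of the bookkeeping: you first isolate the first integral $(B')^2=B^2+1-K^2/B^2$, while the paper multiplies through by $B^4$ and simplifies at the end.
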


\begin{proof}
From $B^2=a\cosh(2s)-1/2$, differentiating once gives $BB'=a\sinh(2s)$,
and differentiating again, $(B')^2+BB''=2a\cosh(2s)=2B^2+1$. Hence
\begin{equation}\label{eq:BB-identity}
BB''=2B^2+1-(B')^2.
\end{equation}
Now $(B')^2=(BB')^2/B^2=a^2\sinh^2(2s)/B^2$, and
\[
B^4(B''/B-1)=B^4+B^2-(B')^2\cdot B^2=B^2(B^2+1)-a^2\sinh^2(2s).
\]
Since $B^2+1=a\cosh(2s)+1/2$ and $B^2=a\cosh(2s)-1/2$,
\[
B^2(B^2+1)=\bigl(a\cosh(2s)\bigr)^2-1/4=a^2\cosh^2(2s)-1/4,
\]
hence
\[
B^4(B''/B-1)=a^2\cosh^2(2s)-1/4-a^2\sinh^2(2s)=a^2-1/4=K^2,
\]
where we used $\cosh^2-\sinh^2=1$. Combining with \eqref{eq:IIsq} yields
\eqref{eq:conserved}.
\end{proof}

\begin{lemma}[Coordinate equations and free boundary condition]\label{lem:coord}
Each coordinate function $\Phi_a^A:\Sigma_a\to\mathbb{R}$ satisfies, with
the geometric sign convention $\Delta_g=\mathrm{div}_g\nabla$,
\begin{equation}\label{eq:coord-eq}
\Delta_g\Phi_a^A=2\Phi_a^A\qquad(A=0,1,2,3).
\end{equation}
The free boundary condition $\Sigma_a\perp\partial B^3(r(a))$ is
equivalent to
\begin{equation}\label{eq:fb}
\nu^0(s_0,\theta)=0,
\end{equation}
where $\nu^0$ is the $0$-th Minkowski component of the unit normal,
and where $s_0=s_0(a)$ is the unique positive root of
\begin{equation}\label{eq:fb-explicit}
\tanh\varphi(s)=\frac{B(s)\,K(a)}{a\sinh(2s)}.
\end{equation}
The boundary radius is then $r(a)=\arccosh(A(s_0)\cosh\varphi(s_0))$.
\end{lemma}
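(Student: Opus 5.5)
We prove the two assertions of Lemma~\ref{lem:coord} separately: first the coordinate equations \eqref{eq:coord-eq}, then the free boundary condition in the forms \eqref{eq:fb} and \eqref{eq:fb-explicit}.

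\emph{Coordinate equations.} For an immersion $\Phi:\Sigma\to\mathbb{H}^3\subset\mathbb{R}^{3,1}$, the Laplacian of the position vector is
\[
\Delta_g\Phi = \vec H + 2\Phi .
\]
This is the standard hyperboloid-model identity. Its tangential part vanishes, the $\mathbb{H}^3$-normal part is the mean curvature vector $\vec H$, and the component along the position vector is $-\sum_i\langle \partial_i\partial_i\Phi,\Phi\rangle_L\cdot(-\Phi)\cdot(-1)$. Since $\langle\Phi,\Phi\rangle_L=-1$, we have $\langle\partial_i\Phi,\Phi\rangle_L=0$ and hence $\langle\partial_i^2\Phi,\Phi\rangle_L=-\langle\partial_i\Phi,\partial_i\Phi\rangle_L$. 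This yields the coefficient $\dim\Sigma=2$. By Lemma~\ref{lem:minimal}, $\vec H=0$, which gives \eqref{eq:coord-eq} componentwise. As a cross-check, one can verify $A=0,2$ directly in the coordinates of Lemma~\ref{lem:metric}, where
\[
\Delta_g=B^{-1}\partial_s(B\partial_s)+B^{-2}\partial_\theta^2 .
\]
For $\Phi^2=B\cos\theta$ this reduces to $B''+B'^2/B-1/B=2B$, which is exactly \eqref{eq:BB-identity}.

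\emph{Free boundary condition.} The ball $B^3(r)$ is $\{x_0\le\cosh r\}$. The outward normal to $\partial B^3(r)$ at $x$ is therefore proportional to the tangential projection of $e_0$ onto $T_x\mathbb{H}^3$, namely $e_0^\top=e_0-x_0x$ (up to sign). Orthogonality of $\Sigma$ to $\partial B$ along $\partial\Sigma$ means this normal lies in $T\Sigma$, i.e.\ it is orthogonal to $\nu$. Since $\langle x,\nu\rangle_L=0$, this says $\langle e_0,\nu\rangle_L=0$, i.e.\ $\nu^0=0$, which is \eqref{eq:fb}. Rotational invariance in $\theta$ makes the condition independent of $\theta$.

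Next we compute $\nu$ explicitly. Since $\partial_\theta\Phi$ lies in the $(x_2,x_3)$-plane and the normal must be orthogonal to $\Phi$, $\partial_s\Phi$ and $\partial_\theta\Phi$, we make the ansatz
\[
\nu=(\alpha\sinh\varphi+\beta\cosh\varphi,\ \alpha\cosh\varphi+\beta\sinh\varphi,\ \gamma\cos\theta,\ \gamma\sin\theta).
\]
We then solve the linear conditions using $A'=a\sinh(2s)/A$, $B'=a\sinh(2s)/B$ and $\varphi'=K/(A^2B)$. Orthogonality to $\Phi$ gives $\beta A=\gamma B$ up to the Lorentz sign, and orthogonality to $\partial_s\Phi$ fixes $\alpha$ in terms of $\gamma$. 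Imposing $\nu^0=0$ then reduces to the relation $\tanh\varphi\cdot(\text{stuff})=\dots$, which rearranges to \eqref{eq:fb-explicit}. The radius formula $r=\arccosh(\Phi^0(s_0))$ is immediate from $\cosh\mathrm{dist}(p_0,x)=-\langle p_0,x\rangle_L=x_0$.

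\emph{Existence and uniqueness of $s_0$.} This is the main obstacle. Set
\[
F(s)=\tanh\varphi(s)-\frac{BK}{a\sinh 2s}.
\]
As $s\to0^+$, the right-hand fraction tends to $+\infty$ while $\tanh\varphi\to0$, so $F\to-\infty$. As $s\to\infty$, $\varphi$ converges to a finite limit $\varphi_\infty>0$, because $A^2B\sim e^{3s}$; meanwhile $B/\sinh 2s\sim e^{-s}\to0$. Hence $F>0$ for large $s$, and a root exists.

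For uniqueness we would show monotonicity. The first attempt is to show that
\[
G(s)=\frac{a\sinh(2s)\tanh\varphi(s)}{B(s)}
\]
is strictly increasing. Its derivative involves $\varphi'\,\mathrm{sech}^2\varphi>0$ together with $(\sinh 2s/B)'$, which we check is positive via $B'=a\sinh 2s/B$: it equals $\bigl(2\cosh 2s\,B^2-a\sinh^2 2s\bigr)/B^3$, and this is positive since $2B^2\cosh 2s\ge a\sinh^2 2s$. Consequently $G$ is increasing, and $G=K$ has exactly one positive root.
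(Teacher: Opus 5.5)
Your proposal is correct and follows essentially the paper's route. The coordinate identity comes from the Gauss formula for $\mathbb{H}^3\subset\mathbb{R}^{3,1}$: your vector form $\Delta_g\Phi=\vec H+2\Phi$ is equivalent to the paper's $\mathrm{Hess}^{\mathbb{H}^3}\Phi^A=\Phi^A g$. The free boundary condition comes from $\langle e_0-x_0x,\nu\rangle_L=-\nu^0$. Uniqueness comes from monotonicity, and your claim that $\sinh(2s)/B$ increases is exactly the paper's statement that $R$ decreases.

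The positivity you assert without comment is $2B^2\cosh 2s-a\sinh^2 2s=a\cosh^2 2s-\cosh 2s+a>0$, which the paper settles by the negative discriminant $1-4a^2$. Your elided step in the normal computation also closes: the ansatz gives $\alpha=-\gamma a\sinh(2s)/(AK)$, hence $\nu^0=\tfrac{\gamma}{AK}\bigl(BK\cosh\varphi-a\sinh(2s)\sinh\varphi\bigr)$.
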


\begin{proof}
For  \eqref{eq:coord-eq}: in $\mathbb{H}^3$, the coordinate functions
$\Phi^A$ ($A=0,1,2,3$) satisfy $\Delta_{\mathbb{H}^3}\Phi^A=3\Phi^A$
(direct computation in geodesic coordinates, or alternatively the
identity $\Delta_{\mathbb{H}^n}\Phi^A=n\Phi^A$ for the ambient
coordinates of the Lorentz model). Since $\Sigma_a$ is minimal and of
codimension one, the standard relation
$\Delta_g f=\Delta_{\mathbb{H}^3}f|_{\Sigma_a}-\mathrm{Hess}_{\mathbb{H}^3}(f)(\nu,\nu)$
holds for any function $f$ on $\mathbb{H}^3$. The Hessian of $\Phi^A$
in $\mathbb{H}^3$ satisfies $\mathrm{Hess}_{\mathbb{H}^3}(\Phi^A)=\Phi^A
g_{\mathbb{H}^3}$ (a direct computation), so
$\mathrm{Hess}_{\mathbb{H}^3}(\Phi^A)(\nu,\nu)=\Phi^A$, and
\eqref{eq:coord-eq} follows.

For the free boundary condition: $\partial B^3(r)\subset\mathbb{H}^3$ is
the level set $\{x\in\mathbb{H}^3:x^0=\cosh(r)\}$. Its inward
$\mathbb{H}^3$-unit normal at a point $\Phi_a(s,\theta)$ on the
boundary is
\[
\eta_{\partial B^3(r)}=\frac{1}{\sinh(r)}(-\partial_0+\Phi_a^0\,\Phi_a),
\]
where $\partial_0=(1,0,0,0)$. The orthogonality condition reads
$\langle\nu,\eta_{\partial B^3(r)}\rangle_L=0$, i.e.\
$-\nu^0\cdot(-1)+\Phi_a^0\,\langle\nu,\Phi_a\rangle_L=0$. Since
$\langle\nu,\Phi_a\rangle_L=0$ along $\Sigma_a$ (being normal in the
ambient sense), this reduces to $\nu^0=0$, which is \eqref{eq:fb}.

The explicit form \eqref{eq:fb-explicit} follows by direct computation
of $\nu$ from the cross-product formula in $\mathbb{R}^{3,1}$ and the
identity $A^2-B^2=1$. For existence and uniqueness of
$s_0\in(0,\infty)$ for each $a>1/2$: the left-hand side of
\eqref{eq:fb-explicit}, $\tanh\varphi(s)$, is strictly increasing
from $\tanh\varphi(0)=0$ towards $\tanh\varphi(\infty)<1$ as
$s\to\infty$ (since $\varphi$ is itself strictly increasing and bounded
above; the bound follows from the convergence of the integral defining
$\varphi$). The right-hand side of \eqref{eq:fb-explicit},
\[
R(s):=\frac{B(s)\,K(a)}{a\sinh(2s)},
\]
behaves as $R(s)\sim B(0)K(a)/(2as)\to+\infty$ as $s\to 0^+$, and
$R(s)\to 0$ as $s\to\infty$ (since $B(s)\sim a^{1/2}\,2^{-1/2}e^{s}$
and $\sinh(2s)\sim e^{2s}/2$, so $R(s)\sim\mathrm{const}\cdot
e^{-s}\to 0$). The function $R$ is strictly decreasing on
$(0,\infty)$: indeed, using $B'/B=a\sinh(2s)/B^{2}$,
\[
\frac{R'(s)}{R(s)}=\frac{B'(s)}{B(s)}-2\coth(2s)
=\frac{a\sinh(2s)}{B^{2}}-\frac{2\cosh(2s)}{\sinh(2s)},
\]
and putting the two terms over the common denominator
$B^{2}\sinh(2s)>0$ together with $B^{2}=a\cosh(2s)-\tfrac{1}{2}$ and
$\sinh^{2}(2s)=\cosh^{2}(2s)-1$, the numerator becomes
\[
a\sinh^{2}(2s)-2\cosh(2s)\,B^{2}
=-\bigl(a\cosh^{2}(2s)-\cosh(2s)+a\bigr).
\]
The polynomial $\xi\mapsto a\xi^{2}-\xi+a$ has discriminant
$1-4a^{2}<0$ for $a>1/2$ and positive leading coefficient, hence is
strictly positive for every real $\xi$. Therefore $R'(s)/R(s)<0$ on
$(0,\infty)$. Consequently the equation $\tanh\varphi(s)=R(s)$ admits
a unique positive solution $s_0=s_0(a)$.
\end{proof}

\section{Killing--Jacobi fields and the variation principle}\label{sec:killing}

The Lie algebra of isometries of $\mathbb{H}^3$ is
$\mathfrak{so}(3,1)$, of dimension $6$. Its elements correspond to
Killing vector fields on $\mathbb{H}^3$. Among these, only the
$3$-dimensional subalgebra of \emph{spatial rotations}
$\mathfrak{so}(3)\subset\mathfrak{so}(3,1)$, generated by
$L_{ij}=x_i\partial_j-x_j\partial_i$ for $1\le i<j\le 3$, fixes the
geodesic ball $B^3(r)$ centred at $p_0=(1,0,0,0)$. The remaining
Lorentzian boosts $L_{0i}=x_0\partial_i+x_i\partial_0$ ($i=1,2,3$) do
not preserve $B^3(r)$ and therefore do not generate admissible
variations.

\begin{lemma}[Killing--Jacobi fields]\label{lem:killing-Jacobi}
Let $K$ be any element of $\mathfrak{so}(3)\subset\mathfrak{so}(3,1)$,
viewed as a vector field on $\mathbb{H}^3$. The function
\[
u_K:=\langle K,\nu\rangle_L:\Sigma_a\to\mathbb{R}
\]
satisfies the Jacobi equation
\[
L_{\Sigma_a}u_K=\Delta_g u_K+(|\II|^2-2)u_K=0,
\]
together with the Robin boundary condition
\[
\partial_\eta u_K=\coth(r(a))\,u_K\quad\text{on }\partial\Sigma_a.
\]
\end{lemma}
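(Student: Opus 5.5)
The plan has two independent parts: the interior Jacobi equation and the Robin boundary condition.

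\emph{Interior equation.} I would use the standard fact that a Killing field generates a one-parameter family of isometries $\psi_t=\exp(tK)$. Each $\psi_t(\Sigma_a)$ is again minimal, so the normal speed $u_K=\langle K,\nu\rangle_L$ of this variation lies in the kernel of the linearized mean curvature operator. With the paper's sign conventions that operator is $\Delta_g+|\II|^2+\mathrm{Ric}_{\mathbb{H}^3}(\nu,\nu)=\Delta_g+|\II|^2-2$. This step needs no symmetry of $\Sigma_a$ and applies to all of $\mathfrak{so}(3,1)$.

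As an independent check in the Lorentz model, I would write $K=L_{ij}$ as the linear field $x\mapsto Mx$ with $M$ antisymmetric with respect to $\langle\cdot,\cdot\rangle_L$. Then $u_K=\langle M\Phi_a,\nu\rangle_L$. A direct computation of $\Delta_g$ using $\Delta_g\Phi_a=2\Phi_a$ (Lemma~\ref{lem:coord}), the Weingarten equation $\nabla\nu=-S$, and Codazzi reproduces the Jacobi equation. I would cite the first argument and use the second only as a consistency check.

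\emph{Boundary condition.} Here we need that $K\in\mathfrak{so}(3)$ is tangent to $\partial B^3(r)$, equivalently $\langle K,\eta\rangle_L=0$, where $\eta=\sinh(r)^{-1}(-\partial_0+\Phi^0\Phi)$ is the unit normal from the proof of Lemma~\ref{lem:coord}. This holds because $K^0=0$ and $\langle M\Phi,\Phi\rangle_L=0$.

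On $\partial\Sigma_a$ the free boundary condition gives the decomposition $T\mathbb{H}^3=\mathrm{span}(\eta,\nu)\oplus T\partial\Sigma_a$, and the outward conormal of $\Sigma_a$ is $\eta_\Sigma=-\eta$. I would then compute
\[
\partial_{\eta_\Sigma}u_K=\langle\nabla_{\eta_\Sigma}K,\nu\rangle_L+\langle K,\nabla_{\eta_\Sigma}\nu\rangle_L .
\]
\begin{itemize}
\item The first term equals $-\langle\nabla_\nu K,\eta_\Sigma\rangle_L$ by the Killing property.
\item For the second term, write $K=\langle K,\nu\rangle\nu+K^T$ with $K^T$ tangent to $\partial\Sigma_a$.
\item The free boundary condition makes $\eta_\Sigma$ a principal direction of the sphere's second fundamental form restricted to $\Sigma_a$. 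This gives $\langle\nabla_{K^T}\nu,\eta_\Sigma\rangle=\pm\langle\nabla_{K^T}\eta_\Sigma,\nu\rangle$, which equals $\II_{\partial B}(K^T,\nu)=0$ because the geodesic sphere is umbilic with $\II_{\partial B}=\coth r\,g$.
\item Tangency of $K$ to $\partial B$ along the whole sphere gives $\langle\nabla_\nu K,\eta\rangle=-\langle K,\nabla_\nu\eta\rangle=-\coth r\,\langle K,\nu\rangle$, using umbilicity again.
\end{itemize}
Collecting these terms yields $\partial_\eta u_K=\coth(r)\,u_K$.

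\emph{Main obstacle.} The difficult part is keeping the signs consistent in this last computation: inward versus outward $\eta$, the orientation of $\nu$, and the sign of $\II_{\partial B}$. I would first fix conventions so that the Robin form matches the second variation $Q(u)=\int|\nabla u|^2-(|\II|^2-2)u^2-\int_{\partial}\coth r\,u^2$. Then I would verify the final sign on the explicit $f_*(s)\cos\theta$ at $s=s_0$, with $\nu^0=0$, as a sanity check.
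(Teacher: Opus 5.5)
Your proposal is correct. For the interior equation it is the paper's argument: the isometric flow of $K$ keeps $\Sigma_a$ minimal, so the normal speed $u_K$ is a Jacobi field, and this holds for all of $\mathfrak{so}(3,1)$.

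For the boundary condition the two routes differ. The paper argues globally. Because the flow of $K\in\mathfrak{so}(3)$ preserves $B^3(r(a))$, it moves $\Sigma_a$ through free boundary minimal surfaces, and the Robin condition is asserted to be the linearization of the orthogonality condition; the verification is omitted as standard. You instead verify $\partial_\eta u_K=\coth r\,u_K$ as a pointwise identity along $\partial\Sigma_a$. Your ingredients are the Killing equation, the tangency $\langle K,\eta\rangle_L=0$ on all of $\partial B^3(r)$, and umbilicity of the geodesic sphere. This supplies exactly the step the paper skips. Linearizing $\langle\nu_t,\eta\rangle_L=0$ along a variation whose velocity $K$ has a tangential part (sliding $\partial\Sigma_a$ along the sphere) comes down to your computation anyway. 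Your version also shows where each hypothesis enters. Tangency of $K$ to $\partial B^3(r)$ is what singles out $\mathfrak{so}(3)$. Umbilicity is used twice: once to kill the $K^T$-term via $\II_\Sigma(T,\eta)=0$ for $T$ tangent to $\partial\Sigma_a$, and once to produce the constant $\coth r$. The paper's version is shorter and conceptual but not self-contained.

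On the signs you flagged: the proof of Lemma~\ref{lem:coord} calls $\eta=\sinh(r)^{-1}(-\partial_0+\Phi^0\Phi)$ inward, but it is the unit gradient of $x^0=\cosh\mathrm{dist}(p_0,\cdot)$, since $\eta^0=\sinh r>0$. So it points outward, and the outward conormal is $\eta_\Sigma=+\eta$, not $-\eta$. Your last bullet, $\bar\nabla_\nu\eta=\coth r\,\nu$, is correct precisely for this outward $\eta$. With $\eta_\Sigma=\eta$ the first term is $-\langle\bar\nabla_\nu K,\eta\rangle_L=\coth r\,u_K$ and the second vanishes. This gives $\partial_{\eta_\Sigma}u_K=\coth r\,u_K$ for the outward conormal, matching $Q$. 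As written, combining $\eta_\Sigma=-\eta$ with that bullet would flip the sign.

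Your planned sanity check confirms the corrected version. At $s=s_0$ the outward conormal is $\partial_s$, so $f_*(s_0)=\partial_s\Phi_a^0=\eta^0=\sinh r$. Since the $s$-lines are unit-speed geodesics of $g$, $\partial_s^2\Phi_a^0=\Phi_a^0+\II(\partial_s,\partial_s)\,\nu^0=\cosh r$, because $\nu^0(s_0)=0$. Hence $f_*'(s_0)=\coth r\,f_*(s_0)$.
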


\begin{proof}
This is a direct adaptation to $\mathbb{H}^3$ of the standard variation
principle for Jacobi fields generated by ambient Killing fields
(cf.\ \cite[\S 3.3]{Devyver} for the Euclidean case): the flow of $K$
is an isometry of $\mathbb{H}^3$ that preserves $B^3(r(a))$, and
therefore generates a one-parameter family of FBMS through $\Sigma_a$.
The normal component of the variation vector field is exactly
$u_K\,\nu$, and the Jacobi equation together with the Robin boundary
condition is the linearized statement of ``minimality is preserved at
first order'' and ``free boundary condition is preserved at first
order''. The constant $\coth(r(a))$ in the Robin condition arises from
the second fundamental form of $\partial B^3(r(a))$ in $\mathbb{H}^3$:
geodesic spheres $\partial B^3(r)$ in $\mathbb{H}^3$ have second
fundamental form $\coth(r)\,g_{\partial B^3(r)}$ (umbilical with
constant principal curvature $\coth r$). We omit the detailed
verification, which is standard.
\end{proof}

We focus in particular on $L_{12}$ and $L_{13}$. The unit normal to
$\Sigma_a$ in $\mathbb{H}^3$ admits a closed form along the meridian
$\theta=0$, which we now derive.

\begin{lemma}[Unit normal along $\theta=0$]\label{lem:nu-explicit}
For every $s\in(-s_0,s_0)$, the unit normal $\nu$ to $\Sigma_a$ in
$\mathbb{H}^3$ at the point $\Phi_a(s,0)$ is given (up to a global
choice of orientation, fixed once and for all) by
\begin{equation}\label{eq:nu-explicit}
\begin{aligned}
\nu^0(s,0)&=\frac{K\cosh\varphi(s)}{A(s)}-\frac{a\sinh(2s)\sinh\varphi(s)}{A(s)B(s)},\\[2pt]
\nu^1(s,0)&=\frac{K\sinh\varphi(s)}{A(s)}-\frac{a\sinh(2s)\cosh\varphi(s)}{A(s)B(s)},\\[2pt]
\nu^2(s,0)&=\frac{K}{B(s)},\qquad
\nu^3(s,0)=0.
\end{aligned}
\end{equation}
\end{lemma}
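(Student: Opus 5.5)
The plan is to verify directly that the vector $\nu$ defined by \eqref{eq:nu-explicit} has the four properties that characterize the unit normal of $\Sigma_a$ in $\mathbb{H}^3$ at $\Phi_a(s,0)$:
\[
\langle\nu,\Phi_a\rangle_L=0,\qquad \langle\nu,\partial_s\Phi_a\rangle_L=0,\qquad \langle\nu,\partial_\theta\Phi_a\rangle_L=0,\qquad \langle\nu,\nu\rangle_L=1.
\]
These four conditions fix $\nu$ up to sign, and the sign is the global orientation choice mentioned in the statement. This avoids the cross-product formula in $\mathbb{R}^{3,1}$ altogether.

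At $\theta=0$ we have
\[
\Phi_a=(A\cosh\varphi,\,A\sinh\varphi,\,B,\,0),\qquad \partial_\theta\Phi_a=(0,0,0,B).
\]
Since $\nu^3=0$, the $\theta$-condition holds trivially.

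For the tangency to the hyperboloid, write $\nu^0,\nu^1$ as $\alpha(\cosh\varphi,\sinh\varphi)+\beta(\sinh\varphi,\cosh\varphi)$ with $\alpha=K/A$ and $\beta=-a\sinh(2s)/(AB)$. The Minkowski pairing of $(\cosh\varphi,\sinh\varphi)$ with itself is $-1$, and its pairing with $(\sinh\varphi,\cosh\varphi)$ is $0$. Hence
\[
\langle\nu,\Phi_a\rangle_L=-A\alpha+B\,\nu^2=-K+K=0.
\]

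For the $s$-condition we use
\[
\partial_s\Phi_a=A'(\cosh\varphi,\sinh\varphi,\cdot)+A\varphi'(\sinh\varphi,\cosh\varphi,\cdot)\ \text{in the first two slots},\qquad \partial_s\Phi_a^2=B'.
\]
This gives
\[
\langle\nu,\partial_s\Phi_a\rangle_L=-A'\alpha+A\varphi'\beta+B'\,\frac{K}{B}.
\]
Now substitute $A'=a\sinh(2s)/A$, $B'=a\sinh(2s)/B$ and $\varphi'=K/(A^2B)$. The three terms become
\[
-\frac{Ka\sinh 2s}{A^2}-\frac{Ka\sinh 2s}{A^2B^2}+\frac{Ka\sinh 2s}{B^2}.
\]
This combination equals $Ka\sinh(2s)\bigl(-B^2-1+A^2\bigr)/(A^2B^2)$, which vanishes by $A^2-B^2=1$.

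For the normalization,
\[
\langle\nu,\nu\rangle_L=-\alpha^2+\beta^2+\frac{K^2}{B^2}=\frac{K^2}{B^2}-\frac{K^2}{A^2}+\frac{a^2\sinh^2(2s)}{A^2B^2}=\frac{K^2+a^2\sinh^2(2s)}{A^2B^2},
\]
again using $A^2-B^2=1$. It then remains to check that $A^2B^2=a^2\cosh^2(2s)-\tfrac14$. This holds, and it equals $K^2+a^2\sinh^2(2s)$ because $K^2=a^2-\tfrac14$. The same identity appeared in the proof of Lemma \ref{lem:conserved}. So $\langle\nu,\nu\rangle_L=1$.

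There is no real obstacle. The only delicate point is bookkeeping of Minkowski signs in the hyperbolic-rotation frame $(\cosh\varphi,\sinh\varphi)$, $(\sinh\varphi,\cosh\varphi)$, and that frame is what makes each check one line. As a consistency check, $\nu^0=0$ reduces to $\tanh\varphi=BK/(a\sinh 2s)$, which is exactly \eqref{eq:fb-explicit} of Lemma \ref{lem:coord}.
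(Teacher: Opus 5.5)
Your proof is correct and is essentially the paper's argument in the verification direction. The paper characterizes $\nu$ by the same four conditions, solves the linear system for $(\nu^0,\nu^1)$ by Cramer's rule, and then normalizes. You instead check the stated vector against those conditions, using the same identities ($A^2-B^2=1$, $A'=a\sinh(2s)/A$, $B'=a\sinh(2s)/B$, $\varphi'=K/(A^2B)$, $K^2+a^2\sinh^2(2s)=A^2B^2$), and appeal to uniqueness up to sign. Your hyperbolic-rotation frame makes each check a one-liner. The only thing left implicit is that $\Phi_a,\partial_s\Phi_a,\partial_\theta\Phi_a$ are linearly independent with spacelike orthogonal complement, which follows at once from $\langle\Phi_a,\Phi_a\rangle_L=-1$ and the immersion property.
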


\begin{proof}
At $\theta=0$, $\Phi_a(s,0)=(A\cosh\varphi,A\sinh\varphi,B,0)$, and the
two tangent vectors to $\Sigma_a$ at this point are
\[
T_s=(A'\cosh\varphi+A\varphi'\sinh\varphi,\;A'\sinh\varphi+A\varphi'\cosh\varphi,\;B',\;0),
\quad
T_\theta=(0,0,0,B).
\]
The unit normal $\nu(s,0)\in T_{\Phi_a}\mathbb{H}^3$ satisfies the four
conditions
\begin{equation}\label{eq:nu-ortho}
\langle\nu,\Phi_a\rangle_L=0,\qquad
\langle\nu,T_s\rangle_L=0,\qquad
\langle\nu,T_\theta\rangle_L=0,\qquad
\langle\nu,\nu\rangle_L=1.
\end{equation}
The third condition gives $B\,\nu^3=0$, hence $\nu^3(s,0)=0$. The first
two are linear in $(\nu^0,\nu^1,\nu^2)$:
\begin{equation}\label{eq:lin-system}
\begin{cases}
-A\cosh\varphi\,\nu^0+A\sinh\varphi\,\nu^1+B\,\nu^2=0,\\[2pt]
-(A'\cosh\varphi+A\varphi'\sinh\varphi)\nu^0+(A'\sinh\varphi+A\varphi'\cosh\varphi)\nu^1+B'\nu^2=0.
\end{cases}
\end{equation}
Solving \eqref{eq:lin-system} for $(\nu^0,\nu^1)$ in terms of $\nu^2$
by Cramer's rule, the relevant $2\times 2$ determinant of the
$(\nu^0,\nu^1)$-block equals
\[
-A\cosh\varphi\bigl(A'\sinh\varphi+A\varphi'\cosh\varphi\bigr)+A\sinh\varphi\bigl(A'\cosh\varphi+A\varphi'\sinh\varphi\bigr)
=-A^2\varphi'=-K/B,
\]
where the last equality uses $\varphi'=K/(A^2B)$. The resulting
expressions, after using $AB'-BA'=a\sinh(2s)/(AB)$ (which follows from
$A'=a\sinh(2s)/A$, $B'=a\sinh(2s)/B$ and $A^2-B^2=1$) and
$AB\varphi'=K/A$, simplify to
\[
\nu^0=\frac{\nu^2}{AK}\bigl[BK\cosh\varphi-a\sinh(2s)\sinh\varphi\bigr],\quad
\nu^1=\frac{\nu^2}{AK}\bigl[BK\sinh\varphi-a\sinh(2s)\cosh\varphi\bigr].
\]
Substituting these into the unit-norm condition,
$-( \nu^0)^2+(\nu^1)^2+(\nu^2)^2=1$, and using the identity (proved by
direct expansion)
\[
\bigl[BK\sinh\varphi-a\sinh(2s)\cosh\varphi\bigr]^2-\bigl[BK\cosh\varphi-a\sinh(2s)\sinh\varphi\bigr]^2
=a^2\sinh^2(2s)-B^2K^2,
\]
together with $K^2+a^2\sinh^2(2s)=a^2\cosh^2(2s)-1/4=A^2B^2$, one
obtains $(\nu^2)^2=K^2/B^2$. Choosing the orientation
$\nu^2(s,0)=K/B(s)>0$ (which is consistent and does not vanish since
$B(s)>0$ for $a>1/2$), the formulas \eqref{eq:nu-explicit} follow.
\end{proof}

\begin{remark}\label{rem:nu0-fb}
The free boundary condition $\nu^0(s_0,0)=0$ reads, from
\eqref{eq:nu-explicit},
$BK\cosh\varphi=a\sinh(2s)\sinh\varphi$ at $s=s_0$, i.e.\
$\tanh\varphi(s_0)=BK/(a\sinh(2s_0))$, recovering
\eqref{eq:fb-explicit}.
\end{remark}

\begin{lemma}\label{lem:K12-K13}
With the parametrization \eqref{eq:param} and the convention
$L_{12}=x_1\partial_2-x_2\partial_1$, $L_{13}=x_1\partial_3-x_3\partial_1$:
\[
\langle L_{12},\nu\rangle_L=f_*(s)\cos\theta,\qquad
\langle L_{13},\nu\rangle_L=f_*(s)\sin\theta,
\]
where
\begin{equation}\label{eq:fstar}
f_*(s)=\frac{K\sinh\varphi(s)+a\,B(s)\sinh(2s)\cosh\varphi(s)}{A(s)B(s)}.
\end{equation}
The function $f_*$ is real-analytic and odd in $s$, vanishes only at
$s=0$, and is strictly positive on $(0,s_0]$ (hence strictly negative
on $[-s_0,0)$).
\end{lemma}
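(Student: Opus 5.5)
The plan is a direct computation. First reduce to the meridian $\theta=0$ using the rotational symmetry. Then substitute the explicit normal from Lemma~\ref{lem:nu-explicit}. The qualitative properties of $f_*$ are then read off from \eqref{eq:fstar}.

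\emph{Step 1: equivariance of $\nu$.} Let $R_\theta$ denote the Lorentz rotation by angle $\theta$ in the $(x_2,x_3)$-plane. By \eqref{eq:param}, $\Phi_a(s,\theta)=R_\theta\Phi_a(s,0)$. Since $R_\theta$ is an isometry of $\mathbb{H}^3$ preserving $\Sigma_a$, and the orientation is continuous in $\theta$, we get $\nu(s,\theta)=R_\theta\nu(s,0)$. From Lemma~\ref{lem:nu-explicit}, the components $\nu^0,\nu^1$ at $(s,\theta)$ are therefore independent of $\theta$, while
\[
\nu^2(s,\theta)=\frac{K}{B}\cos\theta,\qquad \nu^3(s,\theta)=\frac{K}{B}\sin\theta .
\]
This is the only conceptual point. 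If one prefers not to invoke the orientation argument, the four conditions \eqref{eq:nu-ortho} can be checked directly with $T_\theta=(0,0,-B\sin\theta,B\cos\theta)$.

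\emph{Step 2: evaluate the pairings.} As vectors at $x$, $L_{12}=(0,-x_2,x_1,0)$ and $L_{13}=(0,-x_3,0,x_1)$, and only spatial components enter the Minkowski pairing. Hence
\[
\langle L_{12},\nu\rangle_L=-x_2\nu^1+x_1\nu^2 .
\]
Insert $x_1=A\sinh\varphi$, $x_2=B\cos\theta$ and the components above. The common factor $\cos\theta$ comes out, and the bracket equals
\[
-\frac{BK\sinh\varphi}{A}+\frac{a\sinh(2s)\cosh\varphi}{A}+\frac{AK\sinh\varphi}{B}.
\]
Since $A/B-B/A=(A^2-B^2)/(AB)=1/(AB)$, this bracket is exactly \eqref{eq:fstar}. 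The computation for $L_{13}$ is identical with $(x_3,\nu^3)$ in place of $(x_2,\nu^2)$, and yields the factor $\sin\theta$ instead. As an optional check, one can verify $f_*=\frac{d}{ds}[A\cosh\varphi]$ using $A'=a\sinh(2s)/A$ and $A\varphi'=K/(AB)$.

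\emph{Step 3: properties of $f_*$.} Analyticity follows because $A,B>0$ are analytic and $\varphi$ is the integral of an analytic function; here $B(s)^2\ge a-\tfrac12>0$ is used. For oddness, $A$ and $B$ are even, $\varphi$ is odd (its integrand is even), and $\sinh(2s)$ is odd, so both terms of the numerator are odd while the denominator is even. For $s>0$ we have $\varphi(s)>0$ and $K>0$, so both terms of the numerator are strictly positive; thus $f_*>0$ on $(0,s_0]$, and in fact on all of $(0,\infty)$. At $s=0$ both terms vanish. By oddness, $f_*<0$ on $[-s_0,0)$, so $s=0$ is the only zero. I expect no real obstacle beyond keeping the signs straight in Step 1.
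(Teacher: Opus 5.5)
Your proposal is correct and follows the paper's proof essentially step for step. Like the paper, you use the equivariance $\nu(s,\theta)=R_\theta\nu(s,0)$ to reduce to the meridian, substitute Lemma~\ref{lem:nu-explicit}, and simplify with $A^2-B^2=1$ to obtain \eqref{eq:fstar}; you then read off oddness and term-by-term positivity of the numerator. Your sign-continuity justification of the equivariance and your observation that $f_*>0$ on all of $(0,\infty)$ make explicit what the paper leaves implicit.
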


\begin{proof}
Since $L_{12}$ acts on the $(x_1,x_2)$-plane, its restriction to the
point $\Phi_a(s,\theta)$ is the vector field with components
$(0,-x_2,x_1,0)=(0,-B\cos\theta,A\sinh\varphi,0)$. Therefore
\[
\langle L_{12},\nu\rangle_L
=-B\cos\theta\,\nu^1(s,\theta)+A\sinh\varphi\,\nu^2(s,\theta).
\]
By the rotational symmetry $\Phi_a(s,\theta+\alpha)=R_\alpha\Phi_a(s,\theta)$,
where $R_\alpha$ is the rotation by $\alpha$ in the $(x_2,x_3)$-plane,
the unit normal transforms as $\nu(s,\theta+\alpha)=R_\alpha\nu(s,\theta)$.
Applied to the values along $\theta=0$ provided by Lemma
\ref{lem:nu-explicit}, this gives
\[
\nu^0(s,\theta)=\nu^0(s,0),\quad
\nu^1(s,\theta)=\nu^1(s,0),\quad
\nu^2(s,\theta)=\cos\theta\,\nu^2(s,0),\quad
\nu^3(s,\theta)=\sin\theta\,\nu^2(s,0),
\]
the simplification of $\nu^2,\nu^3$ being possible because
$\nu^3(s,0)=0$. Substituting,
\[
\langle L_{12},\nu\rangle_L=\bigl[A\sinh\varphi\cdot\nu^2(s,0)-B\cdot\nu^1(s,0)\bigr]\cos\theta.
\]
The same computation for $L_{13}$ yields the analogous expression with
$\sin\theta$. The bracket equals
\[
\frac{K\,A\sinh\varphi}{B}-B\!\left[\frac{K\sinh\varphi}{A}-\frac{a\sinh(2s)\cosh\varphi}{AB}\right]
=K\sinh\varphi\,\frac{A^2-B^2}{AB}+\frac{a\sinh(2s)\cosh\varphi}{A}.
\]
Using $A^2-B^2=1$ and reducing to common denominator $AB$ gives
\eqref{eq:fstar}.

\smallskip
\noindent\emph{Parity.} In $f_*(s)$ the numerator
$K\sinh\varphi(s)+a B(s)\sinh(2s)\cosh\varphi(s)$ is the sum of two odd
functions of $s$ ($\sinh\varphi$ is odd because $\varphi$ is the
integral from $0$ of a positive even function, and $\sinh(2s)$ is
odd; $A,B,\cosh\varphi$ are even, and $a,K$ are constants). The
denominator $A(s)B(s)$ is even. Hence $f_*$ is odd.

\smallskip
\noindent\emph{Sign and zeros.} For $s\in(0,s_0]$, all factors
$K,\sinh\varphi(s),a,B(s),\sinh(2s),\cosh\varphi(s),A(s)$ are strictly
positive (this uses $a>1/2$ and the strict monotonicity of $\varphi$),
so $f_*(s)>0$. By oddness, $f_*(s)<0$ on $[-s_0,0)$. Finally,
$f_*(0)=0$ by direct evaluation ($\sinh\varphi(0)=0$ and $\sinh(0)=0$).
Real-analyticity is inherited from $A,B,\varphi$.
\end{proof}

In particular, $f_*$ is a non-zero solution to the radial ODE in mode
$|k|=1$:
\begin{equation}\label{eq:radial-ODE}
f_*''+\frac{B'}{B}f_*'+\Bigl(|\II|^2-2-\frac{1}{B^2}\Bigr)f_*=0,
\end{equation}
satisfying $f_*'(s_0)=\coth(r(a))\,f_*(s_0)$.

The closed form \eqref{eq:fstar} admits a structural rewriting which
will play a central role in Section \ref{sec:k1} and in the
applications below.

\begin{corollary}[Geometric form of $f_*$]\label{cor:fstar-Phi0}
The radial profile $f_*$ of the Killing--Jacobi fields generated by
$L_{12},L_{13}$ coincides with the $s$-derivative of the time
coordinate along the meridian $\theta=0$:
\begin{equation}\label{eq:fstar-Phi0}
f_*(s)=\partial_s\Phi_a^0(s,0)=\frac{d}{ds}\bigl[A(s)\cosh\varphi(s)\bigr].
\end{equation}
Equivalently, denoting $r(s):=\mathrm{dist}_{\mathbb{H}^3}(p_0,\Phi_a(s,0))$,
\begin{equation}\label{eq:fstar-r}
f_*(s)=\sinh r(s)\cdot r'(s)=\frac{d}{ds}\cosh r(s).
\end{equation}
\end{corollary}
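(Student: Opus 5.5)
The plan is a direct differentiation of the meridian time coordinate $\Phi_a^0(s,0)=A(s)\cosh\varphi(s)$, followed by comparison with the closed form \eqref{eq:fstar} of Lemma \ref{lem:K12-K13}. The second identity \eqref{eq:fstar-r} should then come from the standard distance formula in the hyperboloid model.

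\emph{Step 1: derivative of $A\cosh\varphi$.} By the product rule,
\[
\frac{d}{ds}\bigl[A\cosh\varphi\bigr]=A'\cosh\varphi+A\varphi'\sinh\varphi .
\]
I will use two facts already recorded in the proof of Lemma \ref{lem:metric}. First, $A'=a\sinh(2s)/A$, which follows from differentiating $A^2=a\cosh(2s)+\tfrac12$. Second, $\varphi'=K/(A^2B)$, which is \eqref{eq:varphi}. Substituting these gives
\[
\frac{a\sinh(2s)\cosh\varphi}{A}+\frac{K\sinh\varphi}{AB}.
\]
Over the common denominator $AB$ this is
\[
\frac{K\sinh\varphi+aB\sinh(2s)\cosh\varphi}{AB},
\]
which is exactly \eqref{eq:fstar}. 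Since $\Phi_a^0(s,\theta)=A(s)\cosh\varphi(s)$ does not depend on $\theta$, the same expression equals $\partial_s\Phi_a^0(s,0)$. This proves \eqref{eq:fstar-Phi0}.

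\emph{Step 2: the distance form.} In the Lorentz model, the geodesic distance between two points $x,y\in\mathbb{H}^3$ satisfies $\cosh d(x,y)=-\langle x,y\rangle_L$. Taking $x=p_0=(1,0,0,0)$ and $y=\Phi_a(s,0)$ gives
\[
\cosh r(s)=-\langle p_0,\Phi_a(s,0)\rangle_L=\Phi_a^0(s,0).
\]
This is the same relation used in Lemma \ref{lem:coord} to describe $\partial B^3(r)$ as the level set $\{x^0=\cosh r\}$. Differentiating it and applying Step 1 gives
\[
f_*=\frac{d}{ds}\cosh r(s)=\sinh r(s)\,r'(s).
\]

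\emph{Main obstacle: regularity of $r(s)$.} The only delicate point is that the chain rule needs $r(s)$ to be differentiable. I will check that the meridian never meets $p_0$. Indeed,
\[
\Phi_a^0(s,0)^2-1=\Phi^1(s,0)^2+B(s)^2\ \ge\ B(s)^2\ \ge\ a-\tfrac12>0,
\]
so $\cosh r(s)>1$, hence $r(s)>0$ and $\sinh r(s)>0$. Since $\arccosh$ is analytic on $(1,\infty)$, $r(s)$ is real-analytic, and the identity $f_*=\sinh r(s)\,r'(s)$ is legitimate.

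As a consistency check, $r'$ and $f_*$ then have the same sign. By Lemma \ref{lem:K12-K13}, this means $r$ increases on $(0,s_0]$ and attains its minimum at $s=0$, matching the neck geometry.
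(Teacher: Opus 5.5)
Your proof is correct and follows the paper's argument: the product-rule computation using $A'=a\sinh(2s)/A$ and $\varphi'=K/(A^2B)$ matched against \eqref{eq:fstar} (the paper runs the same computation in reverse), followed by $\cosh\mathrm{dist}_{\mathbb{H}^3}(p_0,x)=-\langle p_0,x\rangle_L=x^0$. Your extra check that $\Phi_a^0(s,0)^2-1\ge B(s)^2\ge a-\tfrac12>0$ is a small addition the paper leaves implicit; it guarantees $r(s)>0$, so that $r$ is differentiable and the form $\sinh r(s)\,r'(s)$ is legitimate.
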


\begin{proof}
Direct expansion of \eqref{eq:fstar} using the identities
$K=A^2 B\,\varphi'$ (definition of $\varphi'$) and
$a\sinh(2s)=AA'$ (from $A'=a\sinh(2s)/A$) gives
\[
f_*(s)=\frac{K\sinh\varphi}{AB}+\frac{a\sinh(2s)\cosh\varphi}{A}
=A\,\varphi'\sinh\varphi+A'\cosh\varphi
=\partial_s(A\cosh\varphi),
\]
which is \eqref{eq:fstar-Phi0}. Equation \eqref{eq:fstar-r} follows
from $\Phi_a^0(s,0)=\cosh\bigl(\mathrm{dist}_{\mathbb{H}^3}(p_0,\Phi_a(s,0))\bigr)$,
which is the standard expression for the geodesic distance in the
hyperboloid model: for $p_0=(1,0,0,0)$ and $x\in\mathbb{H}^3$,
$\cosh\mathrm{dist}_{\mathbb{H}^3}(p_0,x)=-\langle p_0,x\rangle_L=x^0$.
\end{proof}

\begin{remark}\label{rem:Phi0-eigen}
The identity \eqref{eq:fstar-Phi0} reflects a structural feature of the hyperbolic ambient. The function $\Phi_a^0$ on $\Sigma_a$ satisfies
$\Delta_g\Phi_a^0=2\Phi_a^0$ (Lemma \ref{lem:coord}), and hence
$\partial_s\Phi_a^0$ at $\theta=0$ satisfies the
$\theta$-differentiated equation
\[
(\partial_s\Phi_a^0)''+\frac{B'}{B}(\partial_s\Phi_a^0)'
+\Bigl(\frac{B''}{B}-\Bigl(\frac{B'}{B}\Bigr)^2-2\Bigr)(\partial_s\Phi_a^0)=0,
\]
which coincides with the mode-$|k|=1$ Jacobi ODE \eqref{eq:radial-ODE}
exactly when $(B'/B)^2-B''/B+2=2-|\II|^2+1/B^2$, equivalently
$BB''+(B')^2=2B^2+1$ — precisely the Mori catenoid identity
\eqref{eq:BB-identity}.
\end{remark}

\begin{remark}\label{rem:hess-Phi}
The identity $\Delta_g\Phi_a^0=2\Phi_a^0$ on $\Sigma_a$ has a structural
origin which extends beyond the specific Mori parametrization. On the
ambient $\mathbb{H}^3$, the four coordinate functions
$\Phi^A:\mathbb{H}^3\to\mathbb{R}$ ($A=0,1,2,3$) inherited from the
embedding $\mathbb{H}^3\hookrightarrow\mathbb{R}^{3,1}$ satisfy the
tensorial identity
\begin{equation}\label{eq:hess-Phi}
\mathrm{Hess}^{\mathbb{H}^3}\Phi^A(X,Y)=\langle X,Y\rangle_{g_{\mathbb{H}^3}}\cdot\Phi^A
\qquad\text{for all }X,Y\in T\mathbb{H}^3,
\end{equation}
i.e.\ the Hessian is conformal to the ambient metric with conformal
factor $\Phi^A$ itself; the proof is a direct application of the Gauss
formula for $\mathbb{H}^3\subset\mathbb{R}^{3,1}$ together with the
relation $\mathrm{II}^{\mathbb{H}^3}_{\mathbb{R}^{3,1}}(X,Y)=\langle X,Y\rangle_g$.
Tracing \eqref{eq:hess-Phi} yields $\Delta_{\mathbb{H}^3}\Phi^A=3\Phi^A$,
and restricting to a $2$-dimensional minimal $\Sigma\subset\mathbb{H}^3$
gives $\Delta_g\Phi^A=2\Phi^A$ as in Lemma \ref{lem:coord}.

The positive sign of the eigenvalue is the analytic signature of the
constant negative sectional curvature of the ambient: the analogous
identities are $\mathrm{Hess}^{\mathbb{R}^n}\Phi^A=0$ in flat space (so
ambient coordinates restrict to harmonic functions on minimal
surfaces in $\mathbb{R}^n$, eigenvalue $0$) and
$\mathrm{Hess}^{S^n}\Phi^A=-\Phi^A g$ in the round sphere (eigenvalue
$-k$ on $k$-dimensional minimal submanifolds). The Tashiro--type
equation \eqref{eq:hess-Phi}, with $f=\Phi^A$ playing the role of the
unknown, is in fact a classical hallmark of constant-curvature space
forms (cf.\ \cite{Tashiro1965}). From this perspective, Corollary
\ref{cor:fstar-Phi0} expresses the radial Killing--Jacobi profile
$f_*$ as the meridian derivative of a curvature-signed eigenfunction of the intrinsic Laplacian: a manifestly hyperbolic phenomenon, with no Euclidean counterpart.
\end{remark}

\section{Proof of Proposition  \ref{prop:k1}}\label{sec:k1}

We now prove that the Robin kernel of the Jacobi operator
$L_{\Sigma_a}=\Delta_g+(|\II|^2-2)$ restricted to functions of the form
\begin{equation}\label{eq:k1-ansatz}
u(s,\theta)=f(s)\cos\theta\quad\text{or}\quad u(s,\theta)=f(s)\sin\theta
\end{equation}
has dimension exactly $2$ for every $a>1/2$.

Substituting \eqref{eq:k1-ansatz} into the Jacobi equation $L_{\Sigma_a}u=0$
with the Robin boundary condition $\partial_\eta u=\coth(r(a))u$ at
$s=\pm s_0$ reduces, by separation of variables in the metric
$g=ds^2+B(s)^2 d\theta^2$, to the radial boundary value problem
\begin{equation}\label{eq:radial-bvp}
\begin{cases}
f''+\dfrac{B'}{B}f'+\Bigl(|\II|^2-2-\dfrac{1}{B^2}\Bigr)f=0,\quad s\in(-s_0,s_0),\\[4pt]
f'(\pm s_0)=\pm\coth(r(a))\,f(\pm s_0),
\end{cases}
\end{equation}
with the sign convention that $f'(-s_0)$ denotes the derivative on
the (right) interior side at $s=-s_0$, i.e.\ the outward normal
derivative is $-f'(-s_0)$ at the left endpoint.

The differential operator and the boundary conditions in
\eqref{eq:radial-bvp} are invariant under $s\mapsto -s$, so the radial
kernel decomposes as
\begin{equation}\label{eq:parity-decomposition}
\ker^{\text{rad}}=\ker_{\text{odd}}^{\text{rad}}\oplus\ker_{\text{even}}^{\text{rad}}.
\end{equation}
Moreover, in each parity class, the radial ODE has a $2$-dimensional
solution space, of which the parity restriction selects a
$1$-dimensional subspace at the ODE level, and the Robin boundary
condition selects a (possibly empty) further restriction. Thus
\begin{equation}\label{eq:dim-bound}
\dim\ker_{\text{odd}}^{\text{rad}}\le 1,
\qquad
\dim\ker_{\text{even}}^{\text{rad}}\le 1.
\end{equation}

The proof of Proposition  \ref{prop:k1} reduces to the two assertions
\begin{equation}\label{eq:to-prove}
\dim\ker_{\text{odd}}^{\text{rad}}=1,
\qquad
\dim\ker_{\text{even}}^{\text{rad}}=0.
\end{equation}

\begin{lemma}[Odd radial kernel is one-dimensional]\label{lem:odd-1d}
For every $a>1/2$, the space $\ker_{\text{odd}}^{\text{rad}}$ has
dimension exactly $1$, and is spanned by the function $f_*$ from
\eqref{eq:fstar}.
\end{lemma}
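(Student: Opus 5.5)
The plan has two parts. First, the bound $\dim\ker^{\rm rad}_{\rm odd}\le1$ is already recorded in \eqref{eq:dim-bound}. Second, it remains to show that $f_*$ is a nonzero odd element of the radial kernel, i.e.\ that it solves \eqref{eq:radial-bvp}; then $\ker^{\rm rad}_{\rm odd}=\mathrm{span}\{f_*\}$.

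To justify the upper bound precisely, I would argue as follows. The coefficients $B'/B$ (odd) and $|\II|^2-2-1/B^2$ (even, by Lemma \ref{lem:conserved}) are real-analytic on $\mathbb{R}$ because $B>0$ for $a>1/2$. Hence the ODE has a $2$-dimensional solution space on $[-s_0,s_0]$. An odd solution must satisfy $f(0)=0$, and the initial data $(f(0),f'(0))=(0,c)$ determine it up to the scalar $c$. Conversely, by the $s\mapsto-s$ invariance of the equation, the solution with data $(0,1)$ is odd. So the odd solutions form exactly a line, and the Robin condition can only cut this line down to $0$ or leave it intact.

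Next, I would verify that $f_*$ lies in this line and satisfies the boundary condition. Oddness, real-analyticity and $f_*\not\equiv0$ (indeed $f_*>0$ on $(0,s_0]$) are given by Lemma \ref{lem:K12-K13}. For the interior equation, I have two independent routes.
\begin{itemize}
\item Lemma \ref{lem:killing-Jacobi} applied to $L_{12}$ gives $L_{\Sigma_a}(f_*\cos\theta)=0$. Since $\Delta_g(f\cos\theta)=(f''+\tfrac{B'}{B}f'-\tfrac{1}{B^2}f)\cos\theta$ in the metric of Lemma \ref{lem:metric}, this is exactly the radial ODE.
\item Alternatively, I would use Corollary \ref{cor:fstar-Phi0} and Remark \ref{rem:Phi0-eigen}. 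Differentiating $\Delta_g\Phi^0_a=2\Phi^0_a$ (Lemma \ref{lem:coord}, with $\Phi^0_a$ independent of $\theta$) in $s$ gives an ODE for $\partial_s\Phi^0_a$. That ODE coincides with \eqref{eq:radial-ODE} by \eqref{eq:IIsq}, \eqref{eq:conserved} and \eqref{eq:BB-identity}.
\end{itemize}
I would present the second route, since it is self-contained and avoids the omitted verification in Lemma \ref{lem:killing-Jacobi}.

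The main obstacle is the Robin condition $f_*'(s_0)=\coth(r(a))f_*(s_0)$; by oddness it suffices to check it at $s_0$. Using $f_*=\partial_s\Phi^0$, the condition reads $\partial_s^2\Phi^0(s_0)=\coth r(a)\,\partial_s\Phi^0(s_0)$. I would first compute $\partial_s^2\Phi^0$ from the radial form of $\Delta_g\Phi^0=2\Phi^0$, namely $(\Phi^0)''=2\Phi^0-\tfrac{B'}{B}(\Phi^0)'$. The task then becomes showing
\[
2\cosh r(a)=\Bigl(\coth r(a)+\tfrac{B'}{B}\Bigr)\,\sinh r(a)\,r'(s_0),
\]
where $\Phi^0=\cosh r(s)$ and $r(s_0)=r(a)$. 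The free boundary condition $\nu^0(s_0)=0$ (Lemma \ref{lem:coord}) says that the meridian meets the sphere orthogonally. Equivalently, $\partial_s$ is the unit normal to $\partial B^3$ at the boundary, so $r'(s_0)=1$. Concretely, I would verify $r'(s_0)=1$ from \eqref{eq:fb-explicit} and the formula for $f_*$ together with $\cosh r=A\cosh\varphi$. Once that holds, the remaining identity reduces to $\cosh r\,\sinh r\,\tfrac{B'}{B}=\cosh^2 r-\sinh^2 r\cdot(\ldots)$-type relations. I would check these by direct substitution of \eqref{eq:fb-explicit}, using $A^2-B^2=1$ and $K^2+a^2\sinh^2 2s=A^2B^2$.

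If that algebra becomes unwieldy, the fallback is the conceptual one: the flow of $L_{12}$ preserves $B^3(r(a))$ and the free boundary condition, so the Robin condition on $u_{L_{12}}$ follows from Lemma \ref{lem:killing-Jacobi}. Restricting to mode $\cos\theta$ then gives the radial condition at $s_0$.
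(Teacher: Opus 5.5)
Your argument is correct. Its skeleton is the paper's: the upper bound comes from \eqref{eq:dim-bound}, and the lower bound from exhibiting $f_*$. What differs is how you certify that $f_*$ lies in the radial Robin kernel.

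The paper cites Lemma~\ref{lem:K12-K13}, and so ultimately the Killing--Jacobi principle of Lemma~\ref{lem:killing-Jacobi}, whose Robin part is declared standard and not verified. You instead check both conditions from explicit formulas:
\begin{itemize}
\item the interior equation, by differentiating $\Delta_g\Phi^0_a=2\Phi^0_a$ in $s$ (only \eqref{eq:IIsq} and \eqref{eq:BB-identity} are needed here, not \eqref{eq:conserved});
\item the boundary condition, by reducing it to the geometry at $s_0$.
\end{itemize}
The paper's route is shorter and identifies the kernel geometrically. Yours is self-contained, does not rest on the omitted verification, and shows why the Robin constant $\coth r(a)$ comes out right.

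Your last step can be made exact rather than left as a vague reduction. Once $r'(s_0)=1$, the identity to prove is precisely
\[
\frac{B'(s_0)}{B(s_0)}=\coth r(a).
\]
\emph{Geometric proof.} The condition $\nu^0(s_0)=0$ means $\nu$ and $\partial_\theta$ span the tangent plane of $\partial B^3(r(a))$. Hence the unit vector $\partial_s\Phi_a(s_0,0)$ is normal to the sphere, i.e.\ equal to $\pm\bigl(-\partial_0+\cosh r(a)\,\Phi_a\bigr)/\sinh r(a)$. Orthogonality alone gives only this sign ambiguity, equivalently $r'(s_0)=\pm1$; the sign is fixed by $f_*(s_0)>0$. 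Taking the $x_2$-component then gives $B'(s_0)=\coth r(a)\,B(s_0)$.

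\emph{Algebraic proof.} Use \eqref{eq:fb-explicit} together with $a^2\sinh^2(2s)=A^2B^2-K^2$. At $s=s_0$ these give
\[
A\cosh\varphi=\frac{a\sinh(2s_0)}{\sqrt{B^2-K^2}},\qquad \sinh r(a)=\frac{B^2}{\sqrt{B^2-K^2}}.
\]
From these, $f_*(s_0)=\sinh r(a)$, i.e.\ $r'(s_0)=1$. Also $\coth r(a)=a\sinh(2s_0)/B^2=B'(s_0)/B(s_0)$.
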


\begin{proof}
By Lemma  \ref{lem:K12-K13}, the function $\langle L_{12},\nu\rangle_L=
f_*(s)\cos\theta$ lies in $\ker(L_{\Sigma_a})$ and satisfies the Robin
boundary condition. Its radial profile $f_*$ is odd in $s$ and
strictly positive on $(0,s_0]$ (Lemma \ref{lem:K12-K13}), in particular
non-zero. Hence $\dim\ker_{\text{odd}}^{\text{rad}}\ge 1$. The opposite
inequality is \eqref{eq:dim-bound}.
\end{proof}

The non-trivial step is to exclude the even-radial sector.

\begin{lemma}[Even radial kernel is trivial]\label{lem:even-trivial}
For every $a>1/2$, $\dim\ker_{\text{even}}^{\text{rad}}=0$.
\end{lemma}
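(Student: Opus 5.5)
The plan is a Wronskian argument against the explicit odd solution $f_*$ of Lemma \ref{lem:odd-1d}. Suppose $g$ is an even solution of the radial boundary value problem \eqref{eq:radial-bvp}; we will show $g\equiv0$. Since $B(s)^2=a\cosh(2s)-\tfrac12\ge a-\tfrac12>0$, the ODE in \eqref{eq:radial-bvp} has real-analytic coefficients on all of $[-s_0,s_0]$. It can be written in Sturm--Liouville form $(Bf')'+B\bigl(|\II|^2-2-B^{-2}\bigr)f=0$. Hence for any two solutions $f_1,f_2$ the weighted Wronskian
\[
W(s):=B(s)\bigl(f_1(s)f_2'(s)-f_1'(s)f_2(s)\bigr)
\]
is constant in $s$ (Abel's identity). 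I apply this with $f_1=f_*$ and $f_2=g$.

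First I evaluate $W$ at $s=s_0$. By Lemma \ref{lem:K12-K13} and the remark following it, $f_*'(s_0)=\coth(r(a))f_*(s_0)$, and by assumption $g'(s_0)=\coth(r(a))g(s_0)$. Therefore
\[
f_*g'-f_*'g=\coth(r(a))\bigl(f_*g-f_*g\bigr)=0\quad\text{at } s=s_0,
\]
so $W\equiv0$. Next I evaluate $W$ at $s=0$. There $f_*(0)=0$ by oddness, and $g'(0)=0$ by evenness, so $W(0)=-B(0)f_*'(0)g(0)$.

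The only genuine input needed is $f_*'(0)\neq0$. I would check this directly from \eqref{eq:fstar}. At $s=0$ one has $\sinh\varphi\approx\varphi'(0)s$ and $\sinh(2s)\approx2s$, which gives
\[
f_*'(0)=\frac{K\varphi'(0)+2aB(0)}{A(0)B(0)}>0,
\]
since $K>0$, $\varphi'(0)=K/(A(0)^2B(0))>0$, $a>0$ and $B(0)=\sqrt{a-1/2}>0$. Alternatively, $f_*'(0)=\partial_s^2\Phi^0_a(0,0)$ by Corollary \ref{cor:fstar-Phi0}, and this is positive because $s=0$ is a strict minimum of $r(s)$.

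It then follows that $g(0)=0$. Together with $g'(0)=0$, uniqueness for the regular ODE gives $g\equiv0$. Equivalently, $W\equiv0$ forces $g$ to be proportional to $f_*$, and a function that is both even and odd vanishes.

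I do not expect a serious obstacle. The delicate points are bookkeeping: checking that the Robin condition is used with the correct sign at the right endpoint only, which suffices because $W$ is constant, and checking nondegeneracy of the ODE, which holds because $B>0$. The evenness of $g$ enters only through $g'(0)=0$.
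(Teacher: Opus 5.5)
Your proposal is correct and is essentially the paper's argument: the weighted Wronskian $B\,(f_*g'-f_*'g)$ vanishes at $s_0$ because both functions satisfy the Robin condition there, so it vanishes identically, and at $s=0$ parity then forces $g\equiv 0$. The only difference is that you check $f_*'(0)>0$ by direct computation from \eqref{eq:fstar}, whereas the paper gets $f_*'(0)\neq 0$ from ODE uniqueness for a nontrivial odd solution. Your alternative argument via a strict minimum of $r(s)$ would not by itself exclude $f_*'(0)=0$, but the explicit computation already suffices.
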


\begin{proof}
Suppose, for contradiction, that there exists a non-zero
$f_e\in\ker_{\text{even}}^{\text{rad}}$. Together with $f_*\in
\ker_{\text{odd}}^{\text{rad}}$, the pair $(f_e,f_*)$ is linearly
independent, so it forms a fundamental system of solutions of the
radial ODE on $(-s_0,s_0)$.

Consider the Wronskian
\[
W(s):=f_*(s)f_e'(s)-f_e(s)f_*'(s).
\]
The radial ODE in \eqref{eq:radial-bvp} can be written in
self-adjoint form as
\[
\bigl(B(s)\,f'(s)\bigr)'+B(s)\Bigl(|\II|^2-2-\tfrac{1}{B^2}\Bigr)f(s)=0,
\]
which yields the Abel-type identity
\begin{equation}\label{eq:Wronskian}
B(s)\,W(s)=\text{const}=:C_W.
\end{equation}
We now evaluate \eqref{eq:Wronskian} at $s=s_0$. Both $f_*$ and $f_e$
satisfy the Robin condition at $s_0$:
\[
f_*'(s_0)=\coth(r)\,f_*(s_0),\qquad
f_e'(s_0)=\coth(r)\,f_e(s_0),
\]
so
\[
W(s_0)=f_*(s_0)\bigl(\coth(r)f_e(s_0)\bigr)-f_e(s_0)\bigl(\coth(r)f_*(s_0)\bigr)=0,
\]
hence $C_W=B(s_0)\cdot 0=0$, and so $W(s)\equiv 0$ on $(-s_0,s_0)$.

We now derive a contradiction by evaluating $W$ at $s=0$. Since $f_*$
is odd and non-trivial, $f_*(0)=0$ and $f_*'(0)\neq 0$ (otherwise
$f_*\equiv 0$ by uniqueness for the second-order ODE with zero Cauchy
data). Symmetrically, since $f_e$ is even and non-trivial, $f_e'(0)=0$
and $f_e(0)\neq 0$. Therefore
\[
W(0)=f_*(0)f_e'(0)-f_e(0)f_*'(0)=-f_e(0)f_*'(0)\neq 0,
\]
contradicting $W\equiv 0$. Hence no non-trivial $f_e$ exists.
\end{proof}

\begin{proof}[Proof of Proposition  \ref{prop:k1}]
Combining Lemmas  \ref{lem:odd-1d} and  \ref{lem:even-trivial}, the
radial kernel \eqref{eq:parity-decomposition} satisfies
$\dim\ker^{\text{rad}}=1$, with $\ker^{\text{rad}}=\mathbb{R}\langle
f_*\rangle$. In the full mode $|k|=1$ (which is the two-dimensional
space spanned by $\cos\theta$ and $\sin\theta$ as angular factors),
the kernel is therefore
\[
\mathbb{R}\langle f_*(s)\cos\theta\rangle\oplus\mathbb{R}\langle f_*(s)\sin\theta\rangle,
\]
which has dimension $2$. By Lemma  \ref{lem:K12-K13}, these are
identified with $\langle L_{12},\nu\rangle_L$ and $\langle
L_{13},\nu\rangle_L$ respectively.
\end{proof}

\begin{remark}
The explicit identification of the kernel with Killing--Jacobi fields
makes the result \emph{geometric}: the $2$-dimensional kernel in mode
$|k|=1$ reflects the $2$-dimensional family of ``small'' isometric
deformations of $\Sigma_a$ inside $B^3(r(a))$, namely those that rotate
the geodesic ball axis of $\Sigma_a$ infinitesimally about the
$x_2$- and $x_3$-axes. These do not produce new FBMS; they are
infinitesimal congruences.
\end{remark}

\begin{corollary}[Robin Morse index in mode $|k|=1$]\label{cor:index-k1}
For every $a>1/2$, the Robin Morse index of $\Sigma_a$ in mode $|k|=1$
equals $2$. Equivalently, the radial Sturm--Liouville problem
\eqref{eq:radial-bvp} (with $\lambda=0$ replaced by $\lambda$) has
exactly one negative eigenvalue $\mu_0(a)<0$, and the next eigenvalue
is $\mu_1(a)=0$.
\end{corollary}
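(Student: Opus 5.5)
The plan is to reduce to the radial Sturm--Liouville problem \eqref{eq:radial-bvp} with spectral parameter: $-(Bf')'-B(|\II|^2-2-B^{-2})f=\mu Bf$ on $(-s_0,s_0)$, with the Robin conditions at $\pm s_0$. This is a regular Sturm--Liouville problem, since $B\ge\sqrt{a-1/2}>0$ on $[-s_0,s_0]$. Its eigenvalues $\mu_0<\mu_1<\cdots$ are therefore simple, and the $j$-th eigenfunction has exactly $j$ zeros in the open interval $(-s_0,s_0)$.

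This oscillation theorem holds for separated boundary conditions of the form $f'=\alpha f$, whatever the sign of $\alpha$, so the positive Robin coefficient $\coth r(a)$ causes no difficulty. I will double-check that the sign conventions of the quadratic form match those of the paper, with the index counted as the number of negative $\mu$ for $-L$.

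By Lemma \ref{lem:odd-1d} and Lemma \ref{lem:even-trivial}, $0$ is an eigenvalue with eigenfunction $f_*$, and by simplicity there is no other eigenfunction for $0$. By Lemma \ref{lem:K12-K13}, $f_*$ vanishes on $[-s_0,s_0]$ only at $s=0$, so it has exactly one interior zero. Sturm's oscillation theorem then forces $f_*$ to be the eigenfunction of $\mu_1$, hence $\mu_1(a)=0$. The ground state $\mu_0$ (positive, even, with no zeros) satisfies $\mu_0<\mu_1=0$.

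Consequently exactly one radial eigenvalue is negative. Each radial eigenvalue produces the two-dimensional space spanned by $f(s)\cos\theta$ and $f(s)\sin\theta$. The Robin index in mode $|k|=1$, meaning the number of negative eigenvalues of $-L_{\Sigma_a}$ restricted to this mode, is therefore $2\cdot 1=2$.

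The main point requiring care is the identification of $f_*$ with $\mu_1$ rather than a higher eigenvalue. It rests entirely on the zero count of $f_*$, which Lemma \ref{lem:K12-K13} supplies, together with $f_*(\pm s_0)\neq0$. This last fact matters because oscillation counts are for zeros in the open interval, and a zero at an endpoint would not change the conclusion anyway.

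A secondary point is that the separation of variables in mode $|k|=1$ is complete. The quadratic form restricted to this mode decomposes orthogonally into the $\cos\theta$ and $\sin\theta$ parts, each unitarily equivalent to the radial form with weight $B\,ds$. Hence the negative eigenvalues of the full operator in this mode are exactly the $\mu_j<0$, each with multiplicity $2$. For $a>1/2$ the weight and coefficients are analytic, so the problem is regular and no boundary or singular-endpoint issues arise.
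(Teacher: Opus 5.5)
Your proposal is correct and is essentially the paper's proof. Both arguments reduce to the regular radial Sturm--Liouville problem with separated Robin conditions, use the oscillation theorem together with the single interior zero of $f_*$ at $s=0$ to identify $\mu_1(a)=0$ and $\mu_0(a)<0$, and then double the count through the angular factors $\cos\theta,\sin\theta$. Your extra checks only spell out what the paper leaves to classical theory: the sign of the Robin coefficient is irrelevant, a Robin eigenfunction cannot vanish at $\pm s_0$ (since $f=f'=0$ there forces $f\equiv 0$), and the form splits orthogonally by mode.
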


\begin{proof}
The radial mode-$|k|=1$ problem on $(-s_0,s_0)$ is a regular
Sturm--Liouville problem in self-adjoint form
\[
-(B(s)\,f'(s))'+B(s)\Bigl(2+\tfrac{1}{B(s)^2}-|\II|^2(s)\Bigr)f(s)=\mu\,B(s)\,f(s),
\]
with separated Robin boundary conditions \eqref{eq:radial-bvp}, and with
$B(s)>0$ on the closed interval and continuous coefficients. By
classical Sturm--Liouville theory, the spectrum is discrete
$\mu_0<\mu_1<\mu_2<\cdots$ with $\mu_n\to+\infty$, and the eigenfunction
$\phi_n$ corresponding to $\mu_n$ has exactly $n$ zeros in the open
interval $(-s_0,s_0)$.

By Lemma \ref{lem:K12-K13}, $f_*$ is a non-trivial radial Robin
eigenfunction with eigenvalue $0$, having exactly one zero in
$(-s_0,s_0)$ (at $s=0$). Hence $f_*$ is the second eigenfunction, i.e.\
$\mu_1(a)=0$, and $\mu_0(a)<\mu_1(a)=0$ is strictly negative.

In the full mode $|k|=1$, each radial eigenvalue $\mu_n$ has
$2$-dimensional angular multiplicity (spanned by $\cos\theta$ and
$\sin\theta$). Hence the Robin Jacobi operator restricted to mode
$|k|=1$ has exactly two negative eigenvalues (both equal to $\mu_0(a)$)
and a $2$-dimensional kernel (recovering Proposition \ref{prop:k1}).
\end{proof}

\begin{remark}\label{rem:index-coincidence}
The notion of \emph{Robin Morse index} used in Corollary
\ref{cor:index-k1} coincides with the Morse index
$\mathrm{ind}(\Sigma_a)$ as defined by Medvedev
\cite[Definition 5.1]{Medvedev}: the maximal dimension of a subspace
of $C^\infty(\Sigma_a)$ on which the second-variation quadratic form
\[
S(u,u)=\int_{\Sigma_a}\!\bigl(|\nabla u|^2-(|\II|^2-2)u^2\bigr)\,dA
\;-\;\coth(r(a))\!\int_{\partial\Sigma_a}\! u^2\,dL
\]
is negative definite. By integration by parts, $S(u,u)$ equals
$-\int_{\Sigma_a} u\,L_{\Sigma_a} u\,dA$ on functions satisfying the Robin condition $\partial_\eta u=\coth(r(a))\,u$ on $\partial\Sigma_a$, so the negative directions of $S$ are exactly the Robin eigenfunctions of
$L_{\Sigma_a}$ with positive eigenvalue; equivalently, the Robin Morse index is the number of negative eigenvalues of the second-variation form $S$ (i.e.\ of the operator $-L_{\Sigma_a}$ under the Robin boundary condition).
Being the dimension of the negative subspace of the geometric form $S$, the index is independent of the sign convention adopted for $\Delta_g$ (Medvedev:
$\Delta_g=-\mathrm{div}_g\nabla$; here: $\Delta_g=+\mathrm{div}_g\nabla$): the
convention only flips the signs of the eigenvalues of the Jacobi operator, hence whether the index is read off the positive or the negative part of its spectrum, not the dimension of the negative subspace of $S$. The mode-by-mode decomposition employed in this paper is the orthogonal Fourier decomposition $L^2(\Sigma_a)=\bigoplus_{k\in\mathbb{Z}} L^2((-s_0,s_0),B(s)\,ds)\otimes
\mathbb{C}\langle e^{ik\theta}\rangle$ on the product domain
$(-s_0,s_0)\times S^1$, which is preserved by $L_{\Sigma_a}$ (rotational invariance) and by the Robin boundary condition.
\end{remark}

\begin{remark}
Combined with the lower bound $\mathrm{ind}(\Sigma_a)\geq 4$ established
by Medvedev \cite{Medvedev}, Corollary \ref{cor:index-k1} implies that
the Robin Morse index of $\Sigma_a$ in modes $|k|\neq 1$ is at least
$2$. The conjectural equality $\mathrm{ind}(\Sigma_a)=4$ would then
follow from showing that this lower bound of $2$ is also an upper bound;
see Section \ref{sec:open}.
\end{remark}

\section{Proof of Theorem  \ref{thm:asymp}}\label{sec:asymp}

We analyze the implicit free boundary equation \eqref{eq:fb-explicit}
in the regime $a\to\infty$ to derive the asymptotic expansion
\eqref{eq:asymp}. We also record the elementary computation establishing
Proposition  \ref{prop:degen} in passing.

\subsection{The improper integral}

Recall the angular profile $\varphi$ from \eqref{eq:varphi}:
\[
\varphi(s)=\int_0^{s}\frac{K(a)}{A(t)^2\,B(t)}\,dt,\qquad K(a)=\sqrt{a^2-1/4}.
\]
For $a$ large and $s\to\infty$ with $s$ allowed to grow,
\[
A(t)^2\,B(t)=(a\cosh(2t)+\tfrac{1}{2})\cdot(a\cosh(2t)-\tfrac{1}{2})^{1/2}
=a^{3/2}\cosh(2t)^{3/2}\cdot\bigl(1+O(a^{-1})\bigr),
\]
uniformly in $t\geq 0$, and $K(a)=a\,(1+O(a^{-2}))$. Hence
\begin{equation}\label{eq:phi-asymp}
\varphi(s)=\frac{1}{\sqrt{a}}\int_0^{s}\cosh(2t)^{-3/2}\,dt\cdot(1+O(a^{-1})).
\end{equation}

\begin{lemma}[Closed form for the improper integral]\label{lem:Iinfty}
One has
\begin{equation}\label{eq:Iinfty}
I_\infty:=\int_0^{\infty}\cosh(2t)^{-3/2}\,dt=\frac{\Gamma(3/4)^2}{\sqrt{2\pi}}.
\end{equation}
\end{lemma}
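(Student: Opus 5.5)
We evaluate $I_\infty=\int_0^\infty\cosh(2t)^{-3/2}\,dt$ by reducing it to a Beta integral.

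First substitute $u=2t$, so that
\[
I_\infty=\tfrac12\int_0^\infty \cosh(u)^{-3/2}\,du .
\]
Next put $x=\cosh^{-2}u$, i.e.\ $\cosh u=x^{-1/2}$, which maps $u\in(0,\infty)$ onto $x\in(0,1)$. From $\sinh u=\sqrt{x^{-1}-1}=x^{-1/2}(1-x)^{1/2}$ and $dx=-2\cosh^{-3}u\,\sinh u\,du$ we get
\[
du=-\frac{dx}{2x(1-x)^{1/2}} .
\]
Since $\cosh(u)^{-3/2}=x^{3/4}$, this gives
\[
\int_0^\infty\cosh(u)^{-3/2}\,du=\frac12\int_0^1 x^{-1/4}(1-x)^{-1/2}\,dx=\frac12 B\!\left(\tfrac34,\tfrac12\right).
\]
Hence
\[
I_\infty=\frac14\,\frac{\Gamma(3/4)\Gamma(1/2)}{\Gamma(5/4)} .
\]

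It remains to rewrite this in the stated form. Using $\Gamma(5/4)=\tfrac14\Gamma(1/4)$ and $\Gamma(1/2)=\sqrt\pi$, we obtain $I_\infty=\sqrt\pi\,\Gamma(3/4)/\Gamma(1/4)$. The reflection formula gives $\Gamma(1/4)\Gamma(3/4)=\pi/\sin(\pi/4)=\pi\sqrt2$, so $1/\Gamma(1/4)=\Gamma(3/4)/(\pi\sqrt2)$. Therefore
\[
I_\infty=\frac{\sqrt\pi\,\Gamma(3/4)^2}{\pi\sqrt2}=\frac{\Gamma(3/4)^2}{\sqrt{2\pi}},
\]
which is \eqref{eq:Iinfty}.

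Convergence of the integral is clear, since the integrand decays like $2^{3/2}e^{-3t}$ as $t\to\infty$; the Beta integral converges as well, with exponents $-1/4>-1$ and $-1/2>-1$. The only delicate step is the Jacobian of the substitution $x=\cosh^{-2}u$, where the powers are easy to get wrong. As a check, the value should be $\approx 0.5990$, since $\Gamma(3/4)\approx1.2254$ and $\sqrt{2\pi}\approx 2.5066$. A crude comparison agrees: the integrand equals $1$ at $t=0$ and decays roughly like $e^{-3t}$ with a flatter start, so an integral slightly above $1/3$ scaled up by the flat start is consistent with $\approx0.6$.
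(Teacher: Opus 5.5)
Your proof is correct, and it reaches the result by a different substitution from the paper's. The paper sets $u=e^{-4t}$, which gives $\frac{1}{\sqrt2}\int_0^1 u^{-1/4}(1+u)^{-3/2}\,du$. It then uses the inversion $u\mapsto 1/u$ to double this into the full-range integral $\int_0^\infty u^{-1/4}(1+u)^{-3/2}\,du=B(3/4,3/4)$. The inversion works because the integrand $u^{\alpha-1}(1+u)^{-\beta}$ has $\beta=2\alpha$. This produces $\Gamma(3/4)^2/\Gamma(3/2)$ directly, so no reflection formula is needed; the paper invokes reflection only later, to pass between the two forms of $d_\infty$.

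Your substitution $x=\cosh^{-2}u$ skips the symmetrization trick and lands in one step on the Beta integral $B(3/4,\tfrac12)$ on $[0,1]$. The price is that the Gamma values come out as $\Gamma(3/4)\Gamma(1/2)/\Gamma(5/4)$. You must then apply $\Gamma(5/4)=\tfrac14\Gamma(1/4)$ and the reflection formula $\Gamma(1/4)\Gamma(3/4)=\pi\sqrt2$ to reach the stated form, which you do correctly.

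The two answers agree through the duplication identity $B(a,a)=2^{1-2a}B(a,\tfrac12)$ at $a=3/4$. Both routes generalize to arbitrary exponents: yours gives $\int_0^\infty\cosh(2t)^{-p}\,dt=\tfrac14 B(p/2,\tfrac12)$, and the paper's gives the equivalent $2^{p-3}B(p/2,p/2)$. Either form handles the integrals such as $\int_0^\infty\cosh(2t)^{-5/2}\,dt$ that the paper anticipates for the next asymptotic coefficient.
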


\begin{proof}
Substitute $u=e^{-4t}$, so that $e^{2t}=u^{-1/2}$,
$dt=-du/(4u)$, and $\cosh(2t)=(u^{-1/2}+u^{1/2})/2=(1+u)/(2\sqrt{u})$.
Then
\[
\cosh(2t)^{-3/2}=\frac{2\sqrt{2}\,u^{3/4}}{(1+u)^{3/2}},
\]
and the integral becomes
\[
I_\infty=\int_0^1\frac{2\sqrt{2}\,u^{3/4}}{(1+u)^{3/2}}\cdot\frac{du}{4u}
=\frac{1}{\sqrt{2}}\int_0^1\frac{u^{-1/4}}{(1+u)^{3/2}}\,du.
\]
Now make the substitution $v=1/u$ in the integral
$\int_1^\infty u^{-1/4}(1+u)^{-3/2}\,du$:
$dv=-du/u^2$, $u^{-1/4}=v^{1/4}$, and
$(1+u)^{-3/2}=v^{3/2}(1+v)^{-3/2}$, giving
\[
\int_1^\infty u^{-1/4}(1+u)^{-3/2}\,du=\int_0^1 v^{-1/4}(1+v)^{-3/2}\,dv.
\]
Therefore
\[
\int_0^\infty u^{-1/4}(1+u)^{-3/2}\,du=2\int_0^1 u^{-1/4}(1+u)^{-3/2}\,du.
\]
The left-hand side is the standard Beta-function integral
\[
\int_0^\infty u^{\alpha-1}(1+u)^{-\beta}\,du=B(\alpha,\beta-\alpha)
=\frac{\Gamma(\alpha)\Gamma(\beta-\alpha)}{\Gamma(\beta)}
\]
with $\alpha=3/4$, $\beta=3/2$, giving
$\Gamma(3/4)\Gamma(3/4)/\Gamma(3/2)=2\Gamma(3/4)^2/\sqrt\pi$.
Substituting back, we obtain
\[
I_\infty=\frac{1}{\sqrt{2}}\cdot\frac{1}{2}\cdot\frac{2\Gamma(3/4)^2}{\sqrt{\pi}}
=\frac{\Gamma(3/4)^2}{\sqrt{2\pi}}.\qedhere
\]
\end{proof}

\subsection{Asymptotics of $s_0(a)$ and $r(a)$}

\begin{lemma}[Leading-order asymptotics of $s_0(a)$]\label{lem:s0-asymp}
As $a\to\infty$,
\begin{equation}\label{eq:s0-asymp}
s_0(a)=\log a+\log\!\frac{\sqrt{2}}{I_\infty}+o(1)=\log a+\log\!\frac{2\sqrt{\pi}}{\Gamma(3/4)^2}+o(1).
\end{equation}
\end{lemma}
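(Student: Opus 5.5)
The plan is a direct asymptotic analysis of the free boundary equation \eqref{eq:fb-explicit}, $\tanh\varphi(s_0)=R(s_0)$ with $R(s)=B(s)K(a)/(a\sinh(2s))$, exploiting the fact that for large $a$ the left-hand side is uniformly small, of order $a^{-1/2}$. This forces $R(s_0)$ to be small, which in turn forces $s_0$ to be large. Once $s_0$ is known to be large, both sides have simple leading terms, and taking logarithms gives the expansion.

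\emph{Step 1: a priori localization of $s_0$.} By \eqref{eq:phi-asymp} and Lemma \ref{lem:Iinfty}, for every $s\geq0$ we have
\[
0\le\tanh\varphi(s)\le\varphi(\infty)=\frac{I_\infty}{\sqrt a}\bigl(1+O(a^{-1})\bigr).
\]
On the other side, $B(s)^2\ge \tfrac{a}{2}\cosh(2s)$ for $a\ge1$ and $K/a\to1$, so
\[
R(s)\ge c\,\sqrt a\,\frac{\cosh(2s)^{1/2}}{\sinh(2s)}\ge c'\sqrt a\,e^{-s}.
\]
Equating the two bounds at $s=s_0$ gives $e^{-s_0}\le C a^{-1}$, that is, $s_0(a)\ge\log a-C$. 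In particular $s_0\to\infty$ as $a\to\infty$. A similar crude upper bound on $R$, combined with $\varphi(s_0)\ge\varphi(1)\gtrsim a^{-1/2}$, gives $s_0\le\log a+C$. We only need the lower bound, but the upper bound keeps all error terms explicit.

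\emph{Step 2: expanding both sides at $s=s_0$.}

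For the left-hand side, by \eqref{eq:phi-asymp},
\[
\varphi(s_0)=\frac{1}{\sqrt a}\Bigl(I_\infty-\int_{s_0}^\infty\cosh(2t)^{-3/2}dt\Bigr)\bigl(1+O(a^{-1})\bigr).
\]
The tail integral is $O(e^{-3s_0})=o(1)$. Since $\varphi(s_0)=O(a^{-1/2})$, we also have $\tanh\varphi(s_0)=\varphi(s_0)(1+O(a^{-1}))$. Hence
\[
\tanh\varphi(s_0)=\frac{I_\infty}{\sqrt a}\bigl(1+o(1)\bigr).
\]

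For the right-hand side, since $s_0\to\infty$ we can write
\[
B(s_0)^2=\tfrac a2e^{2s_0}\bigl(1+O(e^{-4s_0})+O(a^{-1}e^{-2s_0})\bigr),\qquad \sinh(2s_0)=\tfrac12e^{2s_0}\bigl(1+O(e^{-4s_0})\bigr),\qquad K/a=1+O(a^{-2}).
\]
Together these give
\[
R(s_0)=\sqrt{2a}\,e^{-s_0}\bigl(1+o(1)\bigr).
\]

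\emph{Step 3: conclusion.} Equating the two expansions gives $e^{-s_0}=\dfrac{I_\infty}{\sqrt2\,a}(1+o(1))$. Taking logarithms yields
\[
s_0=\log a+\log\frac{\sqrt2}{I_\infty}+o(1).
\]
Substituting $I_\infty=\Gamma(3/4)^2/\sqrt{2\pi}$ from Lemma \ref{lem:Iinfty} gives $\sqrt2/I_\infty=2\sqrt\pi/\Gamma(3/4)^2$, which is the second form in \eqref{eq:s0-asymp}.

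The main obstacle is the uniformity issue in Step 1. The expansions of Step 2 are valid only once $s_0$ is known to tend to infinity, and the approximation \eqref{eq:phi-asymp} must be applied with its error uniform in $s$. I would therefore establish the bracketing $\log a-C\le s_0\le\log a+C$ first, using only the crude inequalities above and the monotonicity of $\tanh\varphi$ and of $R$ from Lemma \ref{lem:coord}. Only after that would I insert the refined expansions.
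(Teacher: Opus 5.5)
Your proposal is correct and follows the paper's route: you match the leading orders $\tanh\varphi(s_0)\sim I_\infty/\sqrt{a}$ and $R(s_0)\sim\sqrt{2a}\,e^{-s_0}$ of the two sides of \eqref{eq:fb-explicit} and take logarithms. Your Step~1 bracketing $\log a-C\le s_0\le\log a+C$ is a genuine improvement. It comes from $\tanh\varphi\le\varphi(\infty)=O(a^{-1/2})$ and $R(s)\ge c\sqrt{a}\,e^{-s}$, and it supplies the fact that $s_0\to\infty$, which the paper's proof simply assumes (``provided $s_0\to\infty$'') without justification.
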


\begin{proof}
We analyze the free boundary equation \eqref{eq:fb-explicit}:
\[
\tanh\varphi(s_0)=\frac{B(s_0)\,K(a)}{a\,\sinh(2 s_0)}.
\]
For $a\to\infty$ and $s_0\to\infty$, the right-hand side is
\[
\frac{B(s_0)\cdot a}{a\sinh(2s_0)}\bigl(1+O(a^{-2})\bigr)
=\frac{\sqrt{a/2}\,e^{s_0}\bigl(1+o(1)\bigr)}{e^{2s_0}/2}\bigl(1+o(1)\bigr)
=\sqrt{2a}\,e^{-s_0}\bigl(1+o(1)\bigr),
\]
where we used $B(s)^2\sim(a/2)e^{2s}$ and $\sinh(2s_0)\sim e^{2 s_0}/2$
for large $s_0$.

For the left-hand side, by \eqref{eq:phi-asymp},
$\varphi(s_0)=I_\infty/\sqrt{a}\cdot(1+o(1))$ provided $s_0\to\infty$,
hence $\varphi(s_0)\to 0$, and $\tanh\varphi(s_0)\sim\varphi(s_0)\sim
I_\infty/\sqrt{a}$.

Equating the leading orders:
\[
\frac{I_\infty}{\sqrt{a}}\sim\sqrt{2a}\,e^{-s_0}
\quad\Longleftrightarrow\quad e^{-s_0}\sim\frac{I_\infty}{a\sqrt{2}}
\quad\Longleftrightarrow\quad s_0=\log\!\frac{a\sqrt{2}}{I_\infty}+o(1),
\]
which is \eqref{eq:s0-asymp}.
\end{proof}

\begin{proof}[Proof of Theorem  \ref{thm:asymp}]
By Lemma  \ref{lem:coord},
\[
\cosh r(a)=A(s_0)\cosh\varphi(s_0).
\]
For $a\to\infty$ and $s_0\to\infty$:
\[
A(s_0)^2\sim(a/2)e^{2s_0},\qquad
\cosh\varphi(s_0)\to 1\text{ (since $\varphi(s_0)\to 0$)}.
\]
Hence $\cosh r(a)\sim\sqrt{a/2}\,e^{s_0}\bigl(1+o(1)\bigr)$, so
$r(a)=\log(2\cosh r(a))+o(1)=\log(\sqrt{2a}\,e^{s_0})+o(1)$, that is,
\[
r(a)=\tfrac{1}{2}\log(2a)+s_0+o(1)
=\tfrac{1}{2}\log 2+\tfrac{1}{2}\log a+\log a+\log(\sqrt{2}/I_\infty)+o(1).
\]
Combining the two terms:
\[
r(a)=\tfrac{3}{2}\log a+\bigl(\tfrac{1}{2}\log 2+\tfrac{1}{2}\log 2-\log I_\infty\bigr)+o(1)
=\tfrac{3}{2}\log a+\log 2-\log I_\infty+o(1).
\]
Substituting $I_\infty=\Gamma(3/4)^2/\sqrt{2\pi}$ from
Lemma  \ref{lem:Iinfty}:
\[
d_\infty:=\log 2-\log\!\frac{\Gamma(3/4)^2}{\sqrt{2\pi}}
=\log\!\frac{2\sqrt{2\pi}}{\Gamma(3/4)^2}.
\]
Using the reflection identity $\Gamma(1/4)\,\Gamma(3/4)=\pi\sqrt{2}$, this
equals
\[
d_\infty=\log\!\frac{\sqrt{2}\,\Gamma(1/4)^2}{\pi^{3/2}},
\]
which is \eqref{eq:dinfty}. The real-analyticity of $a\mapsto r(a)$ on
$(1/2,\infty)$ follows from the implicit function theorem applied to
\eqref{eq:fb-explicit}, as the left-hand side is real-analytic in
$(s,a)$ and its $s$-derivative at $s=s_0(a)$ is non-zero.
\end{proof}

\begin{proof}[Proof of Proposition  \ref{prop:degen}]
Set $\epsilon:=a-1/2>0$ and look for $s_0(a)$ in the form $s_0=\rho\sqrt\epsilon$,
with $\rho=\rho(\epsilon)$ to be determined. The expansions
\[
\cosh(2\rho\sqrt\epsilon)=1+2\rho^2\epsilon+O(\epsilon^2),\qquad
\sinh(2\rho\sqrt\epsilon)=2\rho\sqrt\epsilon\,(1+O(\epsilon)),
\]
together with $K(a)=\sqrt{\epsilon(1+\epsilon)}=\sqrt\epsilon\,(1+O(\epsilon))$, give
\[
B(s_0)^2=(1/2+\epsilon)\cosh(2s_0)-1/2=\epsilon(1+\rho^2)+O(\epsilon^2),\qquad
A(s_0)^2=1+\epsilon(1+\rho^2)+O(\epsilon^2),
\]
and consequently
\begin{equation}\label{eq:RHS-degen}
R(s_0)=\frac{B(s_0)K(a)}{a\sinh(2s_0)}
=\frac{\sqrt\epsilon\sqrt{1+\rho^2}\,\sqrt\epsilon}{(1/2+\epsilon)\cdot 2\rho\sqrt\epsilon}\bigl(1+O(\epsilon)\bigr)
=\sqrt\epsilon\,\frac{\sqrt{1+\rho^2}}{\rho}\bigl(1+O(\epsilon)\bigr).
\end{equation}
For the left-hand side of \eqref{eq:fb-explicit}, the change of
variable $t=\tau\sqrt\epsilon$ in the integral defining $\varphi$
yields, using $A(t)^2=1+O(\epsilon)$ and $B(t)^2=\epsilon+t^2+O(\epsilon^2)$
uniformly for $t=O(\sqrt\epsilon)$,
\begin{equation}\label{eq:phi-degen}
\varphi(s_0)=\sqrt\epsilon\int_0^{\rho}\frac{d\tau}{\sqrt{1+\tau^2}}\bigl(1+O(\epsilon)\bigr)
=\sqrt\epsilon\,\arcsinh(\rho)\,\bigl(1+O(\epsilon)\bigr),
\end{equation}
hence $\tanh\varphi(s_0)=\sqrt\epsilon\,\arcsinh(\rho)\,(1+O(\epsilon))$. Equating
this to \eqref{eq:RHS-degen} and dividing by $\sqrt\epsilon$,
\begin{equation}\label{eq:rho-implicit}
\arcsinh(\rho)=\frac{\sqrt{1+\rho^2}}{\rho}\,\bigl(1+O(\epsilon)\bigr).
\end{equation}
The function
$F(\rho):=\arcsinh(\rho)-\sqrt{1+\rho^2}/\rho$
is real-analytic on $(0,\infty)$, strictly increasing (its first term
is strictly increasing, the second is strictly decreasing), satisfies
$F(\rho)\to-\infty$ as $\rho\to 0^+$ and $F(\rho)\to+\infty$ as
$\rho\to\infty$. Therefore the limiting equation \eqref{eq:rho-eq} has a
unique positive root $\rho_*$, and by the implicit function theorem
applied at $\rho_*$ (where $F'(\rho_*)>0$), $\rho(\epsilon)=\rho_*+O(\epsilon)$.

To translate into $r(a)$, recall $\cosh r(a)=A(s_0)\cosh\varphi(s_0)$. From
the expansions above and \eqref{eq:phi-degen},
\[
A(s_0)=1+\tfrac{\epsilon}{2}(1+\rho_*^2)+O(\epsilon^2),\qquad
\cosh\varphi(s_0)=1+\tfrac{\epsilon}{2}\arcsinh^2(\rho_*)+O(\epsilon^2).
\]
At $\rho=\rho_*$ the relation \eqref{eq:rho-eq} gives
$\arcsinh^2(\rho_*)=(1+\rho_*^2)/\rho_*^2$, and therefore
\[
A(s_0)\cosh\varphi(s_0)=1+\frac{\epsilon}{2}\bigl(1+\rho_*^2\bigr)\!\left(1+\frac{1}{\rho_*^2}\right)+O(\epsilon^2)
=1+\frac{\epsilon}{2}\bigl(\rho_*+\rho_*^{-1}\bigr)^2+O(\epsilon^2).
\]
Using $\arccosh(1+x)=\sqrt{2x}\,(1+O(x))$ as $x\to 0^+$,
\[
r(a)=\arccosh(A(s_0)\cosh\varphi(s_0))=\bigl(\rho_*+\rho_*^{-1}\bigr)\sqrt\epsilon\,\bigl(1+O(\epsilon)\bigr),
\]
which is \eqref{eq:degen-leading}.
\end{proof}

\begin{remark}
We do not give a closed form for $r'(a)$ in terms of elementary
functions for general $a$. An analytic monotonicity proof of $a\mapsto
r(a)$ on $(1/2,\infty)$ can in principle be obtained from the implicit
function theorem applied to \eqref{eq:fb-explicit} by checking the sign
of $\partial s_0/\partial a$, but is not pursued here.
\end{remark}

\section{Open problems}\label{sec:open}

The analysis of this note suggests several natural directions for
further investigation.

\subsection{Mode-by-mode index decomposition}

Corollary \ref{cor:index-k1} establishes that the contribution of mode
$|k|=1$ to the Robin Morse index of $\Sigma_a$ equals exactly $2$.
Combined with the lower bound $\mathrm{ind}(\Sigma_a)\geq 4$ of
Medvedev \cite{Medvedev}, the contribution of modes $|k|\neq 1$ to the
index is $\geq 2$. The conjectural equality $\mathrm{ind}(\Sigma_a)=4$
of Medvedev would follow from a sharper mode decomposition, which we
formulate explicitly:

\begin{conjecture}[Mode-by-mode Morse index of $\Sigma_a$]\label{conj:modes}
For every $a>1/2$:
\begin{enumerate}
\item[\rm(a)] the Robin Morse index of $\Sigma_a$ in mode $|k|=0$ equals $2$;
\item[\rm(b)] the Robin Morse index of $\Sigma_a$ in mode $|k|\geq 2$ equals $0$.
\end{enumerate}
In particular, combined with Corollary \ref{cor:index-k1}, this would
give $\mathrm{ind}(\Sigma_a)=4$ and
$\mathrm{nul}(\Sigma_a)=\mathrm{nul}(\Sigma_a)|_{|k|=1}=2$ for every
$a>1/2$, settling Medvedev's conjecture.
\end{conjecture}

The conjecture is supported by two structural facts:
\begin{itemize}
\item[(i)] In the Euclidean critical catenoid (Devyver \cite{Devyver}),
the analogous mode decomposition holds: mode $|k|=0$ contributes $2$ to
the index, mode $|k|=1$ contributes $2$, and modes $|k|\geq 2$
contribute $0$.
\item[(ii)] The radial mode-$k$ Jacobi operator has potential
$V_k(s)=|\II|^2-2-k^2/B(s)^2=2K^2/B(s)^4-2-k^2/B(s)^2$ which becomes
strictly more negative (hence more stabilizing on the
$\mu$-spectrum side) as $|k|$ grows.
\end{itemize}

The proof of (a) appears delicate, since unlike mode $|k|=1$ no
Killing--Jacobi field exists in mode $|k|=0$ (the Killing fields
$L_{12},L_{13},L_{23}$ all lie in modes $|k|\leq 1$). A natural
candidate for an explicit second-order test function in mode
$|k|=0$, by analogy with \eqref{eq:fstar-Phi0}, would be
$\partial_a$-derivatives of $\Phi_a^A$ along the family $\{\Sigma_a\}_a$;
making this rigorous, together with a Sturm-theoretic count, is left
to future work.

\subsection{Higher-order asymptotics of the boundary radius}

Theorem \ref{thm:asymp} gives $r(a)=\tfrac{3}{2}\log a+d_\infty+o(1)$,
and Remark \ref{rem:dasymp-error} (in the proof) shows that the
remainder is in fact $O(a^{-1})$. We pose:

\begin{question}\label{q:next-coef}
Compute, in closed form involving values of $\Gamma$ at rational
arguments, the next coefficient $d_1$ in the expansion
\[
r(a)=\tfrac{3}{2}\log a+d_\infty+\frac{d_1}{a}+o(a^{-1})\qquad(a\to\infty).
\]
\end{question}

By the same Beta-function machinery used in \S\ref{sec:asymp},
$d_1$ should be expressible as a definite integral of the form
$\int_0^\infty \cosh(2t)^{-5/2}\,dt$ or a linear combination of such
integrals, all of which evaluate in closed form via
$B(p,q)=\int_0^\infty \cosh(2t)^{-(p+q)}(\sinh 2t)^{2q-1}\,dt$ at rational
$p,q$.

\subsection{Strict monotonicity of $r(a)$}

Theorem \ref{thm:asymp} establishes real-analyticity of
$a\mapsto r(a)$ on $(1/2,\infty)$. Numerical investigation strongly
suggests:

\begin{question}\label{q:monotone}
Is $a\mapsto r(a)$ strictly increasing on $(1/2,\infty)$?
\end{question}

By the implicit function theorem applied to \eqref{eq:fb-explicit}, this
reduces to showing that $\partial F/\partial a > 0$ at the implicit
solution, where $F(a,s_0)$ denotes the free boundary equation. The
sign of this partial derivative does not seem to follow from elementary
manipulations.

\subsection{Higher-dimensional analogs}

The Mori family of catenoids extends to $\mathbb{H}^{n+1}$ for $n\geq 2$
(cf.\ do Carmo--Dajczer \cite{doCarmoDajczer}), each with a critical
free boundary representative $\Sigma_a^{(n)}\subset B^{n+1}(r^{(n)}(a))$.

\begin{question}\label{q:high-dim}
Do the closed forms of Lemma \ref{lem:K12-K13}, Corollary
\ref{cor:fstar-Phi0} and Theorem \ref{thm:asymp} extend to
$\mathbb{H}^{n+1}$? Specifically:
\begin{enumerate}
\item[\rm(a)] Does the Killing--Jacobi profile in the rotation modes
admit a closed form $\partial_s\Phi^A$ for some Minkowski coordinate $A$?
\item[\rm(b)] Does the asymptotic constant $d_\infty^{(n)}$ admit a
closed form involving values of $\Gamma$ at rational arguments?
\end{enumerate}
\end{question}

For (b), heuristics suggest $d_\infty^{(n)}\propto \log\bigl(\Gamma(\alpha_n)\bigr)$
for some explicit rational $\alpha_n$ depending on $n$, by the same
Beta-function evaluation; the natural candidate is $\alpha_n=(2n-1)/(4n-2)$ or a closely related fraction.

\section*{Declarations}

\subsection*{Funding}

The author declares that no funds, grants, or other support were received 
during the preparation of this manuscript.

\subsection*{Conflict of interest}

The author declares no conflict of interest.

\subsection*{Data availability}

Data sharing is not applicable to this article as no datasets were 
generated or analysed during the current study.

\end{document}